\newtheorem{theorem}{Theorem}[section]
\newtheorem{proposition}[theorem]{Proposition}
\newtheorem{corollary}[theorem]{Corollary}
\theoremstyle{definition}
\newtheorem{definition}[theorem]{Definition}
\newtheorem{example}[theorem]{Example}
\theoremstyle{remark}
\newtheorem{remark}[theorem]{Remark}
\numberwithin{equation}{section}
\renewcommand{\@biblabel}[1]{[#1]}
\begin{document}

\title{The Wave Equation in the Context of Reduced Group $C^*$-algebras}



\author{Fan Huang}
\address{Fan Huang, School of Mathematical and Statistical Sciences,
Arizona State University, Tempe, Arizona 85287}
\curraddr{}
\email{fhuang21@asu.edu}
\thanks{}

\subjclass[2020]{35L05, 42A05, 46L05}

\date{\today}

\keywords{Wave Equation, Fourier Series, Reduced Group $C^*$-algebras}


\begin{abstract}

Motivated by the identification $C(\mathbb{T})\cong C_r^*(\mathbb{Z})$ and the wave equation on the circle, we explore the wave equation in the context of reduced group $C^*$-algebras $C_r^*(G)$ for countably infinite, possibly non-abelian groups $G$. Using a one-parameter group of $*$-automorphisms whose infinitesimal generator paves the way to an analogue of the Laplacian, we establish the existence and uniqueness of solutions to the wave equation within this framework.

\end{abstract}

\maketitle



\section{Introduction}

The heat equation and the wave equation are two of the most famous partial differential equations. In the simplest forms, they can be formulated in the context of $C(\mathbb{T})$, the continuous complex functions on the circle, with solution given by Fourier series (for example, see \cite{dym1972fourier}). It is well-known that if $G$ is any countably infinite abelian group, via the Fourier transform, one has the identification of $C^*$-algebras $C(\widehat{G})\cong C_r^*(G)$ where $\widehat{G}$ denotes the dual group of $G$. Although such identification is no longer in place when $G$ is not abelian. Taking inspiration from  $C(\mathbb{T})\cong C_r^*(\mathbb{Z})$, the authors in \cite{bedos2024heat}, among many other things, investigated the heat equation in the context of $C_r^*(G)$ for countably infinite, not necessary abelian groups $G$, by applying the Fourier theory in this framework developed in \cite{bedos2009twisted}. The current article explores the corresponding wave equation analogue. 

Concerning the classical situation, let $\Delta $ denote the Laplace operator on $C^{2}(\mathbb{T})$, and $f_0,g_0 \in C(\mathbb{T})$. For the associated wave problem, one is looking for a function $u:[0,\infty)\to C(\mathbb{T})$ that satisfies the following conditions:
\begin{enumerate}
    \item[$\bullet$] $u(0)=f_0$,
    \item[$\bullet$] $u(t) \in C^2(\mathbb{T})$ for every $t>0$,
    \item[$\bullet$] $u$ is twice differentiable on $(0,\infty)$ and $u^{\prime\prime}(t)=\Delta(u(t))$ for every $t>0$,
    \item[$\bullet$] $\lim_{t\to 0^+}\|u(t)-f_0\|_{\infty}=0$,
    \item[$\bullet$] $\lim_{t\to 0^+}\|u^{\prime}(t)-g_0\|_{\infty}=0$. 
\end{enumerate}

The functions $f_0,g_0$ are the given initial data, usually representing respectively the initial displacement and initial velocity of a vibrating string with fixed ends (cf. \cite{stein2011fourier}). Some smoothness conditions are usually imposed on these functions so that a unique solution can be found.

It should be mentioned that in the above formulation, $u(t)$ outputs an element in $C(\mathbb{T})$ for each $t>0$, and the derivative $u^{\prime}(t)$ is defined by the usual formula, with the limit being with respect to the uniform norm $\|\cdot\|_{\infty}$ on $C(\mathbb{T})$. In addition, the spatial variable is hidden, it can be displayed by defining $v:[0,\infty)\times \mathbb{T}\to \mathbb{C}$ as $v(t,x)=u(t)(x)$, more precisely, if $u$ is a solution to the above wave problem, and $v(t,x)$ is defined as such, then $$v_{tt}(t,x)=v_{xx}(t,x),\enspace t\in (0,\infty),\enspace x \in \mathbb{T}.$$  

We also mention that even though in the classical situation, the Fourier series solution to such a problem usually involves the Fourier sine series expansion, it will be evident soon that, for us, it is more appropriate to make use of the complex Fourier series with respect to the characters $e_n(\theta)=e^{in\theta},n \in \mathbb{Z},\theta \in [0,2\pi)$, where $\mathbb{T}$ is identified with $[0,2\pi)$ via $e^{i\theta}\longleftrightarrow \theta$. 

Then one can check by formally differentiating, that $$u(t)=\widehat{f_0}(0)+t\cdot \widehat{g_0}(0)+\sum_{n \neq 0}\bigg(\cos(tn)\widehat{f_0}(n)+\frac{\sin(tn)}{n}\widehat{g_0}(n) \bigg)e_n$$is a candidate for a solution, where $\widehat{f}$ denotes the Fourier transform of a function $f \in C(\mathbb{T})$.

In light of $C(\mathbb{T})\cong C_r^*(\mathbb{Z})$, the correspondence $e_n \longleftrightarrow \lambda(n)$ for $n \in \mathbb{Z}$ where $\lambda$ is the (left) regular representation, and the identity $\Delta(e_n)=-d(n)e_n$ with $d:\mathbb{Z}\to [0,\infty)$ given by $d(n)=n^2$, the wave problem in the setting of $C_r^*(G)$ for a countably infinite group $G$ can be formulated as follows.

Let $d:G \to [0,\infty)$ be a nonzero function and let $H_d^{\mathcal{D}}:\mathcal{D}\to C_r^*(G)$ be a (usually unbounded) linear map whose domain $\mathcal{D}$ is a subspace of $C_r^*(G)$ containing the linear span of the $\lambda(g)$'s such that $$H_{d}^{\mathcal{D}}(\lambda(g))=-d(g)\lambda(g),\enspace g\in G.$$We think of any such linear map as playing the role of the Laplacian.

We say that a function $u:[0,\infty)\to C^{*}_r(G)$ is a solution of the wave problem on $C_r^*(G)$ associated to $H_{d}^{\mathcal{D}}$ and $x_0,y_0 \in C_r^*(G)$, if $u$ satisfies the following conditions:

\begin{itemize}
    \item[$\bullet$] $u(0)=x_0$,
    \item[$\bullet$] $u(t)\in \mathcal{D}$ for every $t>0$,
    \item[$\bullet$] $u$ is twice differentiable on $(0,\infty)$ and $u^{\prime\prime}(t)=H_{d}^{\mathcal{D}}(u(t))$ for every $t>0$,
    \item[$\bullet$] $\lim_{t\to 0^+}\|u(t)-x_0\|=0$,
    \item[$\bullet$] $\lim_{t\to 0^+}\|u^{\prime}(t)-y_0\|=0.$
\end{itemize}

We mention that such a formulation depends on the prescribed function $d:G \to [0,\infty)$ and a linear map $H_d^{\mathcal{D}}$, including its domain $\mathcal{D}$, with the above effect. We are primarily interested in such a problem where the domain $\mathcal{D}$ is strictly larger than the linear span of the $\lambda(g)$'s. The main difficulty comes from the justification of the validity of the solution when the problem is formulated with an arbitrary operator $H_d^{\mathcal{D}}$. One way to deal with this is to restrict the function $d$ and potentially the operator $H_d^{\mathcal{D}}$ to certain more tractable kinds. For example, the case of the heat problem on $C_r^*(G)$ relies on $d:G \to [0,\infty)$ being negative definite (see section 4 of \cite{bedos2024heat}). The wave version is different.

We will show that for each nonzero homomorphism $b:G \to (\mathbb{R},+)$, letting $d:=b^2$, there is an operator $H_{d}^{\mathcal{D}}$ defined on a reasonably large domain, for which the above wave problem on $C_r^*(G)$ associated to $H_{d}^{\mathcal{D}}$ and some $x_0,y_0 \in C_r^*(G)$ has a unique solution, given by applying the appropriate operators to the initial data, provided that they belong to some suitable subspaces.  And in this case, the solution $u(t)$, which now outputs an element in $C_r^*(G)$ for each $t\geq 0$, has a convergent Fourier series in $C_r^*(G)$ for each $t>0$, with the Fourier series expansion given by $$u(t)=\sum_{g \in \text{ker}(b)}\big(\widehat{x_0}(g)+t\widehat{y_0}(g)\big)\lambda(g)+\sum_{g \not \in \text{ker}(b)}\bigg( \cos(tb(g))\widehat{x_0}(g)+\frac{\sin(tb(g))}{b(g)}\widehat{y_0}(g)\bigg)\lambda(g).$$

It should be mentioned here that while the uniform convergence of Fourier series in $C(\mathbb{T})$ is usually understood in the conditional sense, due to the absence of a natural order on an arbitrary group $G$, it is more appropriate for us, given how the Fourier theory in $C_r^*(G)$ is developed, to interpret convergence in the unconditional sense.

A nonzero homomorphism of a group $G$ into the additive real numbers is sometimes called a real character of $G$. For specific countably infinite groups $G$, one can usually write down an explicit formula for such a function, and in general, such a function exists if and only if $G/[G,G]$ is torsion-free. In particular, this is the case if $G$ is torsion-free abelian (see for example 24.35 of \cite{hewitt1997abstract}).

This paper is organized as follows. Section 2 contains some preliminaries about unordered summation of series in Banach spaces, the reduced group $C^*$-algebra, and its the Fourier theory. Section 3 contains the set up for the wave equation in the abstract context and section 4 contains the material on the existence and uniqueness of the wave problem, making use of the tools developed in the preceding section. The final section includes some further questions and comments.

\vspace{.1in}

\noindent\textbf{Acknowledgements.} The author wishes to thank Erik B\'edos and Steven Kaliszewski for many helpful comments and suggestions regarding this article.

\section{Preliminaries}

The notation and terminology will largely be the same as in \cite{bedos2024heat}, but for the ease of the reader, we recall the major ones that will be needed here. Throughout this paper, we assume that $G$ is a countably infinite group with identity $e$.

\subsection{Unordered Summation}

Let $S$ be a nonempty set and $\{x_s\}_{s \in S}$ be a family of elements in a normed space $(X,\|\cdot\|)$. We say that the (formal) series $\sum_{s \in S}x_s$ converges to $x \in S$ and write $\sum_{s \in S}x_s=x$, if the net of finite partial sums $\big\{\sum_{s\in F}x_s\big\}_{F \in \mathcal{F}}$ converges to $x$, where $\mathcal{F}$ denotes the set of all finite subsets of $S$, directed by inclusion: more precisely, for any $\epsilon >0$, there exists $F_{\epsilon} \in \mathcal{F}$ such that $\|x-\sum_{s \in F}x_s\|<\epsilon$ for every $F \in \mathcal{F}$ containing $F_{\epsilon}$. For more information, see \cite{bedos2024heat} and \cite{Huang2025Wave}

We will frequently use the following facts concerning unordered sums.

\begin{proposition}\label{2.1}
Assume $S$ is a countably infinite set, $X$ is a Banach space, and $\{x_s\}_{s \in S}$ is a family of elements in $X$.
\begin{enumerate}
    \item[$(i)$] $\sum_{s \in S}x_s$ is convergent if and only if $\sum_{s \in S}\varphi(s)x_s$ is convergent for every $\varphi \in \ell^{\infty}(G)$.
    \item[$(ii)$] If $S = A\cup B$ with $A\cap B=\emptyset$, then $\sum_{s \in S}x_s$ is convergent if and only if both $\sum_{s \in A}x_s$ and $\sum_{s \in B}x_s$ are convergent, in which case $$\sum_{s \in S}x_s = \sum_{s \in A}x_s +\sum_{s \in B}x_s.$$ 
    \item[$(iii)$] If $X = \mathbb{R}$ or $X = \mathbb{C}$, then $$\sum_{s \in S}x_s \text{ is convergent if and only if }\sum_{s \in S}|x_s| \text{ is convergent.}$$
\end{enumerate}
\end{proposition}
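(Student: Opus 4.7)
The plan is to deduce all three parts from a single Cauchy-criterion reformulation of unordered convergence. Since $X$ is complete and the collection $\mathcal{F}$ of finite subsets of $S$ directed by inclusion is a net, $\sum_{s \in S} x_s$ converges if and only if for every $\epsilon > 0$ there is a finite $F_\epsilon \subseteq S$ with $\|\sum_{s \in G} x_s\| < \epsilon$ for every finite $G \subseteq S \setminus F_\epsilon$; this follows from the standard observation that the difference of two partial sums indexed by supersets of $F_\epsilon$ splits as the sum over two such $G$'s. I would record this characterisation at the top of the proof and then invoke it repeatedly.

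For part $(i)$, the direction $(\Leftarrow)$ is trivial via $\varphi \equiv 1$. For $(\Rightarrow)$, let $\|\varphi\|_\infty \leq M$. Decomposing $\varphi = (\operatorname{Re}\varphi)^+ - (\operatorname{Re}\varphi)^- + i(\operatorname{Im}\varphi)^+ - i(\operatorname{Im}\varphi)^-$ reduces matters, at the cost of a factor of $4$, to the case $\varphi \colon S \to [0,M]$. For such $\varphi$, given $\epsilon > 0$ pick $F_\epsilon$ as above for $\sum_s x_s$. For any finite $G \subseteq S \setminus F_\epsilon$, enumerate $G = \{s_1,\dots,s_n\}$ so that $\varphi(s_1) \geq \cdots \geq \varphi(s_n) \geq 0$ and set $S_k = x_{s_1} + \cdots + x_{s_k}$. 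Abel summation yields
$$\sum_{k=1}^n \varphi(s_k)\, x_{s_k} = \sum_{k=1}^{n-1} \bigl(\varphi(s_k) - \varphi(s_{k+1})\bigr) S_k + \varphi(s_n)\, S_n,$$
and since each initial segment $\{s_1,\dots,s_k\} \subseteq G \subseteq S \setminus F_\epsilon$, one has $\|S_k\| < \epsilon$; telescoping then bounds the norm by $\varphi(s_1)\,\epsilon \leq M\epsilon$. Hence $\sum_s \varphi(s) x_s$ satisfies the Cauchy criterion (with tolerance $4M\epsilon$) and therefore converges.

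Part $(ii)$ follows directly from the Cauchy criterion without invoking $(i)$: if $F_\epsilon$ works for $\sum_S x_s$, then $F_\epsilon \cap A$ works for $\sum_A x_s$ and $F_\epsilon \cap B$ works for $\sum_B x_s$; conversely, if $F_\epsilon^A \subseteq A$ and $F_\epsilon^B \subseteq B$ witness convergence of the two subseries to tolerance $\epsilon/2$, then any finite $F \supseteq F_\epsilon^A \cup F_\epsilon^B$ splits as $F = (F \cap A) \sqcup (F \cap B)$, giving convergence of $\sum_S x_s$ to $\sum_A x_s + \sum_B x_s$. For part $(iii)$ in the real case, apply $(ii)$ with $A = \{s : x_s \geq 0\}$ and $B = \{s : x_s < 0\}$: each subseries has monotone partial sums, hence is convergent if and only if bounded, which is equivalent to $\sum_s |x_s| < \infty$. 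The complex case reduces to the real case by decomposing into real and imaginary parts and applying $(ii)$ to merge.

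The step I expect to be the main obstacle is the Abel-summation estimate in $(i)$: the key point is that ordering a finite $G$ by decreasing $\varphi$-values makes every initial segment itself a subset of $S \setminus F_\epsilon$, so that the uniform bound $\|S_k\| < \epsilon$ is available and the telescoping collapses to a constant multiple of $M\epsilon$ independent of $G$ and of the particular $\varphi$. Once this is set up, the four-fold decomposition for complex $\varphi$ is routine, and parts $(ii)$ and $(iii)$ drop out by elementary monotone and decomposition arguments.
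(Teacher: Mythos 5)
The paper does not actually prove Proposition \ref{2.1}: it is stated as a collection of standard facts about unordered summation and the reader is referred to \cite{bedos2024heat} and the references therein. So there is no in-paper argument to compare against; what matters is whether your self-contained proof is sound, and it is. The Cauchy-criterion reformulation you place at the top is the correct completeness-based characterisation, and the one genuinely nontrivial step --- part $(i)$ --- is handled correctly: reducing to $\varphi\colon S\to[0,M]$ via the four-fold decomposition (at the cost of a factor $4$), then ordering a finite $G\subseteq S\setminus F_\epsilon$ by decreasing $\varphi$-values so that every initial segment is again a finite subset of $S\setminus F_\epsilon$, which makes the Abel summation telescope to the bound $\varphi(s_1)\epsilon\le M\epsilon$. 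This is exactly the classical argument showing that unconditional summability is preserved under bounded scalar multipliers. Parts $(ii)$ and $(iii)$ are routine consequences of the same criterion as you describe; the only cosmetic imprecision is in the complex case of $(iii)$, where the ``merge'' of real and imaginary parts is not an application of $(ii)$ (which concerns a partition of the index set) but simply the continuity of the coordinate projections together with the inequalities $\max(|\operatorname{Re}x_s|,|\operatorname{Im}x_s|)\le|x_s|\le|\operatorname{Re}x_s|+|\operatorname{Im}x_s|$. With that one sentence adjusted, the proof is complete and would serve as a valid substitute for the citation.
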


\subsection{Fourier Theory in $C_r^*(G)$}

Let $\ell^2(G)$ denote the Hilbert space of square-summable functions on $G$. The left regular representation $\lambda$ of $G$ on $\ell^2(G)$ is defined by $$(\lambda(g)\xi)(h)=\xi(g^{-1}h),\enspace \xi \in \ell^2(G),\enspace g,h\in G.$$

Let $B(\ell^2(G))$ denote the space of all bounded linear operators from $\ell^2(G)$ into itself and $\|\cdot\|$ denote the operator norm on $B(\ell^2(G))$. Let $\mathbb{C}(G):=\text{Span}(\lambda(G))$, called the group algebra. 

The reduced group $C^*$-algebra $C_r^*(G)$ (resp. the group von Neumann algebra $\text{vN}(G)$) is the $C^*$-subalgebra (resp. von Neumann subalgebra) of $B(\ell^2(G))$ generated by $\lambda(G)$. That is $$C_r^*(G):=\overline{\mathbb{C}(G)}^{\|\cdot\|}\enspace \text{ and }\enspace \text{vN}(G):=\overline{\mathbb{C}(G)}^{\text{WOT}}$$ where WOT refers to the weak operator topology.

Let $\{\delta_h\}_{h \in G}$ denote the canonical basis of $\ell^2(G)$, we have $$\lambda(g)\delta_h=\delta_{gh},\enspace g ,h\in G.$$In particular, $\lambda(g)\delta_e=\delta_g$ for all $g \in G$.

The canonical (faithful normal) tracial state $\tau$ on $\text{vN}(G)$ is given by $\tau(x)=\langle x\delta_e,\delta_e\rangle$, and the associated norm on $\text{vN}(G)$ is defined by $\|x\|_{\tau}=\tau(x^*x)^{1/2}$. The map $$x\mapsto \widehat{x}:=x\delta_e$$is an injective linear map from $\text{vN}(G)$ into $\ell^2(G)$, called the Fourier transform of $x$. It satisfies $\tau(x)=\widehat{x}(e)$ and $\|x\|_{\tau}=\|\widehat{x}\|_2\leq \|x\|$, moreover, $$\widehat{x}(g)=\tau(x\lambda(g)^*)\enspace\text{ for all }g \in G.$$

The Fourier series of $x \in \text{vN}(G)$ is the formal series $\sum_{g \in G}\widehat{x}(g)\lambda(g)$. It always converges to $x$ w.r.t. $\|\cdot\|_{\tau}$, but it is not necessarily convergent w.r.t. operator norm for $x \in C_r^*(G)$. Set $$CF(G):=\bigg\{ x \in C_r^*(G):\sum_{g \in G}\widehat{x}(g)\lambda(g) \text{ is convergent in operator norm}\bigg\}.$$If $x \in CF(G)$, then the Fourier series of $x$ necessarily converges to $x$ (see proposition 2.12 in \cite{bedos2009twisted}).

Concerning the Fourier coefficients, the following proposition will be useful, it follows readily from proposition 2.10 in \cite{bedos2009twisted}.

\begin{proposition}\label{2.2}

Assume $\xi:G \to \mathbb{C}$ is such that $\sum_{g \in G}\xi(g)\lambda(g)$ is convergent with respect to operator norm, set $$y:=\sum_{g \in G}\xi(g)\lambda(g) \in C_r^*(G).$$Then $\widehat{y}=\xi$ and $y \in CF(G)$.

\end{proposition}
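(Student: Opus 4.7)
The plan is to verify the two claims in sequence, with the second being essentially a restatement of the hypothesis once the first is established.

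First, I would fix $g \in G$ and compute $\widehat{y}(g)$ using the formula $\widehat{y}(g) = \tau(y\lambda(g)^*)$ recorded in the preliminaries. The key observation is that the canonical trace $\tau$ is norm-continuous (since $|\tau(x)| \le \|x\|$), and right multiplication by $\lambda(g)^*$ is also norm-continuous. Therefore, if $\{y_F\}_{F \in \mathcal{F}}$ denotes the net of finite partial sums of $\sum_{h \in G} \xi(h)\lambda(h)$, then $y_F \lambda(g)^* \to y\lambda(g)^*$ in operator norm, and consequently $\tau(y_F \lambda(g)^*) \to \tau(y\lambda(g)^*) = \widehat{y}(g)$.

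Next, for each finite $F \subset G$, linearity of $\tau$ gives
\[
\tau(y_F \lambda(g)^*) = \sum_{h \in F} \xi(h)\, \tau(\lambda(h)\lambda(g^{-1})) = \sum_{h \in F} \xi(h)\, \tau(\lambda(hg^{-1})).
\]
Since $\tau(\lambda(k)) = \langle \lambda(k)\delta_e, \delta_e\rangle = \langle \delta_k, \delta_e\rangle$ equals $1$ if $k = e$ and $0$ otherwise, the only surviving term is the one with $h = g$ (assuming $g \in F$). Thus for every $F \in \mathcal{F}$ containing $\{g\}$, the partial trace equals $\xi(g)$. Passing to the limit along the net of finite subsets yields $\widehat{y}(g) = \xi(g)$, and since $g$ was arbitrary, $\widehat{y} = \xi$.

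For the second claim, $y \in CF(G)$ means by definition that $\sum_{g \in G} \widehat{y}(g)\lambda(g)$ converges in operator norm. Having just shown $\widehat{y} = \xi$, this is precisely the hypothesis. The main (and only modest) obstacle is the interchange of $\tau$ with the unordered sum, which is handled by norm continuity of $\tau$ as described above; no appeal to summability of $\xi$ as an $\ell^1$-function is needed, only convergence of the net of operator partial sums.
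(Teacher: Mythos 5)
Your argument is correct. The paper itself does not write out a proof of this proposition: it simply notes that the statement "follows readily from proposition 2.10 in \cite{bedos2009twisted}," which establishes the analogous fact in the (more general) twisted setting. Your proof is therefore a genuinely self-contained alternative, and it is the natural one: you use norm-continuity of $\tau$ (from $|\tau(x)|\le\|x\|$) together with the fact that right multiplication by the unitary $\lambda(g)^*$ is isometric to pass the trace through the norm-convergent net of partial sums, and then the orthogonality relation $\tau(\lambda(k))=\langle\delta_k,\delta_e\rangle=\delta_{k,e}$ to identify the limit as $\xi(g)$; restricting to the cofinal subnet of finite sets containing $g$ makes the partial traces constant, so the limit is immediate. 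The second claim is, as you say, just the hypothesis restated once $\widehat{y}=\xi$ is known. What your route buys is independence from the external reference (and it makes transparent that only norm convergence of the operator partial sums is used, not any summability of $\xi$); what the citation buys the paper is brevity and uniformity with the twisted framework it imports elsewhere. No gaps.
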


One important tool that plays a crucial role is that of multipliers.

Let $\varphi \in \ell^{\infty}(G)$. We recall that $\varphi$ is called a multiplier on $G$, i.e., $\varphi \in MA(G)$, if and only if there exists a (necessarily unique) bounded linear operator $M_{\varphi}:C_r^*(G) \to C_r^*(G)$ such that $$M_{\varphi}(\lambda(g))=\varphi(g)\lambda(g) \enspace \text{ for all }g \in G.$$For $x \in C_r^*(G)$ and $g \in G$, we have that $\widehat{M_{\varphi}(x)}(g)=\varphi(g)\widehat{x}(g)$, so the Fourier series of $M_{\varphi}(x)$ is given by $$\sum_{g \in G}\varphi(g)\widehat{x}(g)\lambda(g).$$If in addition $M_{\varphi}(x)\in CF(G)$, then $$M_{\varphi}(x)=\sum_{g \in G}\varphi(g)\widehat{x}(g)\lambda(g).$$Concerning $MA(G)$, it is easy to see that $\ell^2(G)\subseteq MA(G)$. Any positive definite function on $G$ is also in $MA(G)$ (see lemma 1.1 in \cite{haagerup1978example}).

\section{Machinery}

From now on assume that $b:G \to (\mathbb{R},+)$ is a nonzero homomorphism (for example, see \ref{4.5}). By Theorem 1 (and its proof) in \cite{Lance1976UnboundedDO}, there exists a one-parameter group $\{M_t^b\}_{t \in \mathbb{R}}$ of $*$-automorphisms of $C_r^*(G)$ satisfying

\begin{itemize}
    \item[$\bullet$] $M_0^b$ is the identity operator on $C_r^*(G)$, $M_{s+t}^b=M_s^b\circ M_t^b$ for all $s,t \in \mathbb{R}$,
    \item[$\bullet$] the map $t\mapsto M_t^b(x)$ from $\mathbb{R}$ to $C_r^*(G)$ is norm-continuous for every $x \in C_r^*(G)$ (w.r.t. operator norm).
    \item[$\bullet$] $M_t^b(\lambda(g))=e^{tib(g)}\lambda(g)$ for all $g \in G, t \in \mathbb{R}$.
\end{itemize}

The third item says that in particular, for each $t \in \mathbb{R}$, the map $g\mapsto e^{tib(g)}$ belongs to $MA(G)$ and hence for $x \in C_r^*(G)$, we have $$\widehat{M_t^b(x)}(g)=e^{tib(g)}\widehat{x}(g),\enspace g \in G.$$

The general theory of one-parameter $C_0$-semigroups adapted to the case of one-parameter groups (see for example, \cite{applebaum2019semigroups}, \cite{bratteli2012operator} and p.79 in \cite{engel2000one}) gives that the set

$$\mathcal{C}=\bigg\{x \in C_r^*(G):\lim_{t\to 0}\frac{M_t^b(x)-x}{t}\text{ exists in }C_r^*(G) \bigg\}$$is a dense subspace of $C_r^*(G)$. The infinitesimal generator $\delta_b^{\mathcal{C}}:\mathcal{C}\to C_r^*(G)$ of $\{M_t^b\}_{t\in \mathbb{R}}$, which is defined by $$\delta_b^{\mathcal{C}}(x)=\lim_{t\to 0}\frac{M_t^b(x)-x}{t}\text{ for every }x \in \mathcal{C},$$is a closed linear operator. Moreover, we have
\begin{itemize}
    \item[$\bullet$] $M_t^d(\mathcal{C}) \subseteq \mathcal{C}$ and $M_t^b(\delta_b^{\mathcal{C}}(x))=\delta_b^{\mathcal{C}}(M_t^b(x))$ for all $t\in \mathbb{R}$ and all $x \in \mathcal{C}$,
    \item[$\bullet$] for each $x_0 \in \mathcal{C}$, the map $u:[0,\infty)\to C_r^*(G)$ given by $u(t):=M_t^b(x_0)$ is the (unique) solution of the inital value problem $$u^{\prime}(t)=\delta_b^{\mathcal{C}}(u(t))\text{ for }t>0,\enspace u(0)=x_0,$$ 
    \item[$\bullet$] for each $x_0 \in \mathcal{C}$, the map $v:[0,\infty)\to C_r^*(G)$ given by $v(t)= M_{-t}^b(x_0)$ is the (unique) solution of the initial value problem $$v^{\prime}(t)=-\delta_b^{\mathcal{C}}(v(t)) \text{ for }t>0,\enspace v(0)=x_0.$$
\end{itemize}

An alternative description of $\mathcal{C}$ is that it consists of those $x \in C_r^*(G)$ for which there exists a $y \in C_r^*(G)$ such that $$\lim_{t\to 0}\psi\bigg(\frac{M_t^b(x)-x}{t} \bigg)=\psi(y)$$for all $\psi \in C_r^*(G)^*$, in which case we have $\delta_b^{\mathcal{C}}(x)=y$.

One can easily check that $\mathbb{C}(G)\subseteq \mathcal{C}$ and that $$\delta_b^{\mathcal{C}}(\lambda(g))=ib(g)\lambda(g),\enspace g\in G.$$

We view $\delta_b^{\mathcal{C}}$ as an unbounded operator playing the role of the ordinary differential operator in the current setting. Composing it twice we get an operator that's an analogue of the Laplacian, except that we have to make sure that the composition is well-defined in this context. It is well-known that the degree of smoothness of a function $f \in C(\mathbb{T})$ is reflected in the rate of decay of the Fourier coefficients $\widehat{f}(n)$. The following definition corresponds to the latter in the current context.

\begin{definition}\label{3.1}
    Let $n \in \mathbb{N}$, define $F^n(G)$ to be the set of elements $x \in C_r^*(G)$ for which the following two conditions hold:
    \begin{itemize}
        \item[$\bullet$] $$\sum_{g \in G}b(g)^k\widehat{x}(g)\lambda(g)\text{ is convergent w.r.t. $\|\cdot\|$ for }k=0,1,2,...,n,$$
        \item[$\bullet$] $$\sum_{g \not \in \text{ker}(b)}b(g)^{-k}\widehat{x}(g)\lambda(g)\text{ is convergent w.r.t. $\|\cdot\|$ for }k=0,1,2,...,n.$$ 
    \end{itemize}
\end{definition}

One can check that for each $n \in \mathbb{N}$, $F^n(G)$ is a (dense) subspace of $C_r^*(G)$ containing $\mathbb{C}(G)$ and $F^{n+1}(G)\subseteq F^n(G)$. In addition, $F^n(G) \subseteq CF(G)$. We think of $F^n(G)$ as the analogue of $C^n(\mathbb{T})$ in this context.

\begin{proposition}\label{3.2}
    For $n \in \mathbb{N}$ and $t \in \mathbb{R}$, $M_t^b(F^n(G)) \subseteq F^n(G)$.
\end{proposition}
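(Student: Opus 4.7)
The plan is to exploit the fact that $M_t^b$ is a bounded linear operator (being a $*$-automorphism) of $C_r^*(G)$, and thus commutes with norm-convergent unordered summation, together with its explicit action $M_t^b(\lambda(g)) = e^{tib(g)}\lambda(g)$ on the generators.

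The key general observation I would record first is the following: if $X$ is a Banach space, $T : X \to X$ is a bounded linear operator, and $\{x_s\}_{s \in S}$ is a family in $X$ with $\sum_{s \in S} x_s$ convergent in norm to some $x \in X$, then $\sum_{s \in S} T(x_s)$ is also convergent in norm, with sum $T(x)$. This is an immediate $\epsilon$-argument using continuity of $T$ applied to the net of finite partial sums. I would state and prove this briefly (or invoke it as a standard fact) at the start.

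With this in hand, fix $x \in F^n(G)$ and $t \in \mathbb{R}$, and let $y := M_t^b(x)$. By the Fourier transform identity $\widehat{y}(g) = e^{tib(g)}\widehat{x}(g)$ recorded earlier in this section, it suffices to prove, for each $k = 0,1,\dots,n$, that
\[
\sum_{g \in G} b(g)^k e^{tib(g)}\widehat{x}(g)\,\lambda(g) \quad\text{and}\quad \sum_{g \notin \ker(b)} b(g)^{-k} e^{tib(g)}\widehat{x}(g)\,\lambda(g)
\]
converge in operator norm. Since $x \in F^n(G)$, the series $\sum_{g \in G} b(g)^k \widehat{x}(g)\lambda(g)$ and $\sum_{g \notin \ker(b)} b(g)^{-k}\widehat{x}(g)\lambda(g)$ already converge in operator norm to elements of $C_r^*(G)$. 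Applying the bounded operator $M_t^b$ to each of these series, the general observation above gives that the image series converge in operator norm. Using $M_t^b(\lambda(g)) = e^{tib(g)}\lambda(g)$, these image series are exactly the two series displayed above, completing the argument.

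There is no real obstacle: the proof is essentially a continuity argument, and the only thing that needs care is to make sure we apply $M_t^b$ term-by-term to each of the $2(n+1)$ series in the definition of $F^n(G)$ individually (rather than attempting to fold everything into a single series). Once the continuity lemma is in place, each of these applications is identical, and the identity $M_t^b(\lambda(g)) = e^{tib(g)}\lambda(g)$ converts the sum $M_t^b\!\left(\sum_g b(g)^{\pm k}\widehat{x}(g)\lambda(g)\right)$ into the required Fourier series for $M_t^b(x)$.
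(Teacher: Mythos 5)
Your proposal is correct and lands on exactly the same two displayed series as the paper's proof; the only difference is in how the convergence of those series is justified. The paper starts from the series defining membership of $M_t^b(x)$ in $F^n(G)$, rewrites them using $\widehat{M_t^b(x)}(g)=e^{tib(g)}\widehat{x}(g)$, and then cites Proposition \ref{2.1}(i) with the bounded function $g\mapsto e^{tib(g)}$ to conclude convergence from the convergence of the corresponding series for $x$. You instead start from the defining series for $x$, push the bounded operator $M_t^b$ through the net of finite partial sums, and use $M_t^b(\lambda(g))=e^{tib(g)}\lambda(g)$ to identify the image series with the required ones. Both are one-step reductions to the hypothesis $x\in F^n(G)$; the paper's route is marginally leaner because Proposition \ref{2.1}(i) is already on record and no auxiliary continuity lemma needs to be stated, while your route has the small virtue of not needing Proposition \ref{2.1}(i) at all and of making explicit that the sum of the image series is $M_t^b$ applied to the original sum (a fact the paper does not need here but uses elsewhere, e.g.\ in the proof of Proposition \ref{3.8}(ii)). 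Either argument is complete as written.
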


\begin{proof}
    Let $n \in \mathbb{N}, t\in \mathbb{R}$, and $x \in F^n(G)$. To see that $M_t^b(x) \in  F^n(G)$, let $k \in \{0,1,2...,n\}$. We need to show that the series $$\sum_{g \in G}b(g)^k\widehat{M_t^b(x)}(g)\lambda(g)\enspace \text{ and }\enspace\sum_{g \not \in \text{ker}(b)}b(g)^{-k}\widehat{M_t^b(x)}(g)\lambda(g)$$are convergent. We know that $\widehat{M_t^b(x)}(g)=e^{tib(g)}\widehat{x}(g)$ for all $g \in G$, so the two series above are respectively $$\sum_{g \in G}e^{tib(g)}b(g)^k\widehat{x}(g)\lambda(g)\enspace \text{ and }\enspace\sum_{g \not \in \text{ker}(b)}e^{tib(g)}b(g)^{-k}\widehat{x}(g)\lambda(g),$$which are both convergent by (i) of Proposition \ref{2.1} and the assumption that $x \in F^n(G)$. 
\end{proof}

\begin{proposition}\label{3.3}
    $F^1(G) \subseteq \mathcal{C}$ and for $x \in F^1(G)$, $$\delta_b^{\mathcal{C}}(x)=\sum_{g \in G}ib(g)\widehat{x}(g)\lambda(g),$$with the series being convergent w.r.t. $\|\cdot\|$.

\end{proposition}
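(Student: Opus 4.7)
To show both the inclusion $F^1(G)\subseteq\mathcal{C}$ and the displayed formula at once, I would begin by naming the candidate derivative
\begin{equation*}
y := \sum_{g \in G} ib(g)\widehat{x}(g)\lambda(g),
\end{equation*}
which is an element of $C_r^*(G)$ because the series converges in operator norm by the $k=1$ clause of Definition \ref{3.1}. Proposition \ref{2.2} then gives $\widehat{y}(g)=ib(g)\widehat{x}(g)$ for every $g\in G$, which is the formula we want, so what remains is to prove that $\lim_{t\to 0}\|(M_t^b(x)-x)/t - y\| = 0$; this yields membership in $\mathcal{C}$ and the identification $\delta_b^{\mathcal{C}}(x)=y$ simultaneously.

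The key step I propose is the integral identity
\begin{equation*}
M_t^b(x) - x = \int_0^t M_s^b(y)\, ds, \qquad t \in \mathbb{R},
\end{equation*}
where the right-hand side is a $C_r^*(G)$-valued Riemann integral, well-defined because $s \mapsto M_s^b(y)$ is norm-continuous. Once this identity is in hand, dividing by $t$ and using continuity of the integrand at $s=0$ gives
\begin{equation*}
\Big\|\tfrac{M_t^b(x)-x}{t} - y\Big\| = \Big\|\tfrac{1}{t}\int_0^t (M_s^b(y)-y)\, ds\Big\| \le \sup_{|s|\le |t|}\|M_s^b(y)-y\| \xrightarrow[t\to 0]{} 0,
\end{equation*}
which finishes the proof.

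To establish the integral identity I would compare Fourier coefficients and invoke injectivity of the Fourier transform on $\text{vN}(G)\supseteq C_r^*(G)$. The left-hand coefficient at $g$ is $(e^{tib(g)}-1)\widehat{x}(g)$. For the right, the functional $z\mapsto \tau(z\lambda(g)^*) = \widehat{z}(g)$ is bounded and therefore commutes with the Riemann integral, so
\begin{equation*}
\int_0^t \widehat{M_s^b(y)}(g)\, ds = \int_0^t e^{sib(g)} ib(g)\widehat{x}(g)\, ds = (e^{tib(g)}-1)\widehat{x}(g),
\end{equation*}
which matches (the $g\in\ker b$ case is trivial, as both sides vanish).

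The main obstacle here is not any single delicate estimate, but the very reason one needs the integral detour: the naive approach, bounding $\|\sum_g(\tfrac{e^{tib(g)}-1}{t}-ib(g))\widehat{x}(g)\lambda(g)\|$ via the Taylor remainder $|e^{tib(g)}-1-tib(g)|\le t^2 b(g)^2/2$, would reduce matters to convergence of $\sum_g b(g)^2\widehat{x}(g)\lambda(g)$ in operator norm — precisely the $F^2(G)$ condition, a full order stronger than what we may assume. Routing through $\int_0^t M_s^b(y)\, ds$ trades second-order summability for the first-order summability of $b(g)\widehat{x}(g)\lambda(g)$ together with the already-known norm-continuity of $\{M_s^b\}$, which is all that $F^1(G)$ provides.
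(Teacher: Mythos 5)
Your proof is correct, but it follows a genuinely different route from the paper's. The paper invokes the ``alternative description'' of $\mathcal{C}$ (weak generator $=$ strong generator): it fixes $\psi \in C_r^*(G)^*$, writes $\psi\bigl(\tfrac{M_t^b(x)-x}{t}-y\bigr)$ as the scalar series $\sum_g \eta(tb(g))\,b(g)\widehat{x}(g)\psi(\lambda(g))$ with $\eta$ bounded and vanishing at $0$, and kills the limit by dominated convergence, the dominating function $2|b(g)\widehat{x}(g)\psi(\lambda(g))|$ being summable precisely because $x\in F^1(G)$. You instead establish the norm-level identity $M_t^b(x)-x=\int_0^t M_s^b(y)\,ds$ by matching Fourier coefficients (using that $z\mapsto\tau(z\lambda(g)^*)$ is bounded, hence passes through the Riemann integral) and invoking injectivity of $z\mapsto\widehat{z}$ on $\mathrm{vN}(G)$, after which norm convergence of the difference quotient follows from norm continuity of $s\mapsto M_s^b(y)$ alone. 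What your version buys is that it proves \emph{norm} convergence of $\tfrac{M_t^b(x)-x}{t}$ directly, so you never need the nontrivial semigroup-theoretic fact that weak convergence of the difference quotient suffices for membership in $\mathcal{C}$; what the paper's version buys is that it never needs vector-valued integration or injectivity of the Fourier transform, only a scalar dominated-convergence argument. Both arguments correctly get by with $F^1(G)$, and your closing observation --- that the naive Taylor-remainder bound $|e^{i\theta}-1-i\theta|\le\theta^2/2$ would force the $F^2(G)$ hypothesis --- accurately identifies the obstruction that both detours are designed to avoid.
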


\begin{proof}
    Let $x \in F^1(G)$. Set $$y:=\sum_{g \in G}ib(g)\widehat{x}(g)\lambda(g) \in C_r^*(G).$$We need to show that $x \in \mathcal{C}$ and $\delta_b^{\mathcal{C}}(x)=y$. To this end, it suffices to show that for all $\psi\in C_r^*(G)^*$,
    
    \begin{align}
        \lim_{t \to 0}\psi\bigg(\frac{M_t^b(x)-x}{t} \bigg)=\psi(y).\label{equation 3.1}
    \end{align}
    Let $\psi \in C_r^*(G)^*$.

    Since $x \in F^1(G)$, we know that $x$ and $M_t^b(x)$ both belong to $CF(G)$, and we may replace them by their (convergent) Fourier series expansion. Using the assumption that $\psi$ is bounded, to prove (\ref{equation 3.1}), it suffices to show that 

    \begin{align*}
        \lim_{t\to 0}\sum_{g \in G}\bigg(\frac{e^{tib(g)}-1}{t}-ib(g) \bigg)\widehat{x}(g)\psi(\lambda(g))&=0
    \end{align*}

    which is equivalent to

    \begin{align}
        \lim_{t\to 0}\sum_{g \in G}\eta(tb(g))b(g)\widehat{x}(g)\psi(\lambda(g))=0\label{equation 3.2}
    \end{align}

    where $\eta:\mathbb{R}\to \mathbb{C}$ is the function given by 

    \[\eta(x)= \begin{cases} 
      \frac{e^{ix}-1}{x}-i & \text{ if }x\neq 0 \\
      0 & \text{ if }x=0 
   \end{cases}
\]Note that $\eta$ is uniformly bounded (for example, by $2$) and continuous.

Let $(t_n)_{n=1}^{\infty}$ be a sequence in $\mathbb{R}$ converging to $0$ and consider the function $$f_n(g)= \eta(t_nb(g))b(g)\widehat{x}(g)\psi(\lambda(g)).$$ For each $g \in G$, we have $\lim_{n\to \infty}f_n(g)=0$. In addition, $|f_n(g)|\leq 2|b(g)\widehat{x}(g)\psi(\lambda(g))|$ for all $n\in \mathbb{N},g \in G$. Since $x \in F^1(G)$, by $(iii)$ of Proposition \ref{2.1}, the function $g \mapsto 2|b(g)\widehat{x}(g)\psi(\lambda(g))|$ belongs to $\ell^1(G)$. Therefore by the dominated convergence theorem, $$\lim_{n\to \infty}\sum_{g \in G}f_n(g)=0,$$which proves (\ref{equation 3.2}). It follows that $F^1(G)\subseteq \mathcal{C}$.

\end{proof}

\begin{corollary}\label{3.4}
    For $x \in F^1(G)$, $$\widehat{\delta_b^{\mathcal{C}}(x)}(g)=ib(g)\widehat{x}(g),\enspace g \in G,$$and $\delta_b^{\mathcal{C}}(x) \in CF(G)$.
\end{corollary}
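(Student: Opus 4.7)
The plan is to observe that Corollary \ref{3.4} follows immediately from Proposition \ref{3.3} combined with Proposition \ref{2.2}, so there is essentially no new work to do. First I would fix $x \in F^1(G)$ and invoke Proposition \ref{3.3} to write
$$\delta_b^{\mathcal{C}}(x) = \sum_{g \in G} ib(g)\widehat{x}(g)\lambda(g),$$
with the series convergent in operator norm.

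Next I would set $\xi(g) := ib(g)\widehat{x}(g)$ for $g \in G$, noting that the series $\sum_{g \in G}\xi(g)\lambda(g)$ converges in operator norm to $\delta_b^{\mathcal{C}}(x) \in C_r^*(G)$. Applying Proposition \ref{2.2} with $y := \delta_b^{\mathcal{C}}(x)$ then yields both conclusions simultaneously: the Fourier coefficients satisfy $\widehat{\delta_b^{\mathcal{C}}(x)}(g) = \xi(g) = ib(g)\widehat{x}(g)$ for all $g \in G$, and membership $\delta_b^{\mathcal{C}}(x) \in CF(G)$ is automatic.

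Since both ingredients have already been established, I do not expect any genuine obstacle here; the statement is essentially a repackaging of Proposition \ref{3.3} through the lens of Proposition \ref{2.2}. The only thing worth being careful about is that Proposition \ref{2.2} requires operator-norm convergence of the series representing $\delta_b^{\mathcal{C}}(x)$, which is precisely what Proposition \ref{3.3} supplies from the assumption $x \in F^1(G)$.
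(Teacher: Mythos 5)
Your proposal is correct and matches the paper's own argument: both apply Proposition \ref{2.2} to the norm-convergent series supplied by Proposition \ref{3.3} to read off the Fourier coefficients and the membership in $CF(G)$. No issues.
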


\begin{proof}
    The first part follows by applying Proposition \ref{2.2} to the result of Proposition \ref{3.3}. From the first part, for $x \in F^1(G)$, the Fourier series of $\delta_b^{\mathcal{C}}(x)$ is $$\sum_{g \in G}\widehat{\delta_b^{\mathcal{C}}(x)}(g)\lambda(g)=\sum_{g \in G}ib(g)\widehat{x}(g)\lambda(g),$$which is convergent w.r.t. $\|\cdot\|$,  i.e., $\delta_b^{\mathcal{C}}(x) \in CF(G)$.
\end{proof}

This corollary may be thought of as corresponding (in the context of $C_r^*(G)$) to the identity $\widehat{f^{\prime}}(m)=im\widehat{f}(m),m\in \mathbb{Z}$, for $f \in C^1(\mathbb{T})$. 

\begin{proposition}\label{3.5}
    For $n \in \mathbb{N}$, $\delta_b^{\mathcal{C}}(F^{n+1}(G)) \subseteq F^n(G)$.
\end{proposition}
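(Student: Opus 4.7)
The plan is to apply Corollary \ref{3.4} to rewrite the Fourier coefficients of $\delta_b^{\mathcal{C}}(x)$ and then simply check the two convergence conditions in Definition \ref{3.1} directly, reducing everything to the corresponding conditions for $x$ with shifted exponents. The proof is essentially a bookkeeping exercise on the index $k$; the main things to get right are which case of the hypothesis $x \in F^{n+1}(G)$ feeds each case of the conclusion.

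First I would fix $n \in \mathbb{N}$, take $x \in F^{n+1}(G)$, and set $y := \delta_b^{\mathcal{C}}(x)$; by Corollary \ref{3.4}, $\widehat{y}(g) = ib(g)\widehat{x}(g)$ for all $g \in G$. Next I would fix $k \in \{0,1,\ldots,n\}$ and rewrite the two series that need to converge for $y$ in terms of Fourier coefficients of $x$:
\begin{align*}
\sum_{g \in G} b(g)^k \widehat{y}(g)\lambda(g) &= i \sum_{g \in G} b(g)^{k+1} \widehat{x}(g) \lambda(g), \\
\sum_{g \notin \ker(b)} b(g)^{-k} \widehat{y}(g)\lambda(g) &= i \sum_{g \notin \ker(b)} b(g)^{-(k-1)} \widehat{x}(g) \lambda(g).
\end{align*}

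For the first series, since $k+1 \in \{1,2,\ldots,n+1\}$, the first convergence condition in $x \in F^{n+1}(G)$ with index $k+1$ gives exactly what I need. For the second series I would split into two cases. When $k \geq 1$, the exponent $k-1$ lies in $\{0,1,\ldots,n-1\} \subseteq \{0,\ldots,n+1\}$, so the second condition in $x \in F^{n+1}(G)$ with index $k-1$ applies. When $k=0$, the series becomes $i\sum_{g \notin \ker(b)} b(g)\widehat{x}(g)\lambda(g)$; since $b(g)=0$ on $\ker(b)$, this equals $i\sum_{g \in G} b(g)\widehat{x}(g)\lambda(g)$ (using Proposition \ref{2.1}(ii) to justify the restriction), which converges by the first condition of $x \in F^{n+1}(G)$ with index $1$.

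I do not anticipate any real obstacle here: the only care needed is to avoid the off-by-one slip when moving the factor $ib(g)$ from $\widehat{y}(g)$ onto $b(g)^{\pm k}$, and to handle the $k=0$ case of the second condition by absorbing $\ker(b)$ into a sum over all of $G$ at no cost.
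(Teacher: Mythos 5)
Your proof is correct and follows essentially the same route as the paper: apply Corollary \ref{3.4} to identify the Fourier coefficients of $\delta_b^{\mathcal{C}}(x)$, then match the two convergence conditions for $F^n(G)$ against those of $F^{n+1}(G)$ with the exponent shifted by one. You are in fact slightly more careful than the paper, which asserts convergence of $\sum_{g \notin \ker(b)} b(g)^{-(k-1)}\widehat{x}(g)\lambda(g)$ without separating out the $k=0$ case where the exponent becomes positive; your observation that $b$ vanishes on $\ker(b)$ so the restricted sum agrees with the sum over all of $G$ is exactly the right way to close that small gap.
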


\begin{proof}
    Let $ n\in \mathbb{N}$ and $x \in F^{n+1}(G)$. By Proposition \ref{3.3}, we know that $\delta_b^{\mathcal{C}}(x) \in C_r^*(G)$. To prove that $\delta_b^{\mathcal{C}}(x) \in F^n(G)$, let $k\in \{0,1,2,...,n\}$, we need to show that the series

\begin{align}\label{expression 3.3}
    \sum_{g \in G}b(g)^k\widehat{\delta_b^{\mathcal{C}}(x)}(g)\lambda(g)\enspace \text{ and }\sum_{g \not \in \text{ker}(b)}b(g)^{-k}\widehat{\delta_b^{\mathcal{C}}(x)}(g)\lambda(g)
\end{align}
are both convergent w.r.t. $\|\cdot\|$. By Corollary \ref{3.4}, these series are respectively

\begin{align}\label{expression 3.3}
    \sum_{g \in G}ib(g)^{k+1}\widehat{x}(g)\lambda(g)\enspace \text{ and }\sum_{g \not \in \text{ker}(b)}ib(g)^{-(k-1)}\widehat{x}(g)\lambda(g)
\end{align}
which are both convergent w.r.t. $\|\cdot\|$ since $x \in F^{n+1}(G)$.

\end{proof}

We now make the following definition. We first define $d:G \to [0,\infty)$ by $d(g)=b(g)^2$.

\begin{definition}\label{3.6}
    Set $\mathcal{D}:=F^2(G)$. We define $H_d^{\mathcal{D}}:\mathcal{D} \to C_r^*(G)$ to be the linear map given by $$H_d^{\mathcal{D}}:=\delta_b^{\mathcal{C}}\circ \delta_b^{\mathcal{C}}.$$
\end{definition}

Note that it is due to the preceding proposition that the above composition is well-defined. From this definition we have $$H_{d}^{\mathcal{D}}(\lambda(g))=-d(g)\lambda(g)$$ for all $g \in G$. We will show later the existence and uniqueness of solution to the wave problem associated to this operator $H_d^{\mathcal{D}}$ and some (given) $x_0\in F^2(G), y_0 \in F^3(G)$.

We will need the following operator, it modifies the Fourier coefficients of an element by $b(g)$ for all $g \not \in \text{ker}(b)$.  

\begin{definition}\label{3.7}
    We define $T_b:F^1(G) \to C_r^*(G)$ by $$T_b(x)=\sum_{g \not \in \text{ker}(b)}b(g)^{-1}\widehat{x}(g)\lambda(g) \enspace (\text{convergence w.r.t. }\|\cdot\|)$$for all $x \in F^1(G)$.
\end{definition}

It follows from Proposition \ref{2.2} that for $x \in F^1(G)$, the Fourier coefficients for $T_b(x)$ are

    \[\widehat{T_b(x)}(g)= \begin{cases} 
      b(g)^{-1}\widehat{x}(g) & \text{ if }g\not \in \text{ker}(b) \\
      0 & \text{ if }g \in \text{ker}(b) 
   \end{cases}
\]and $T_b(x)\in CF(G)$.

\begin{proposition}\label{3.8}
    For $n \in \mathbb{N}$, the following statements are true.
    \begin{enumerate}
        \item[$(i)$] $T_b(F^{n+1}(G)) \subseteq F^{n}(G)$, and
 $$\delta_b^{\mathcal{C}}(T_b(x))=T_b(\delta_b^{\mathcal{C}}(x))=\sum_{g \not \in \text{ker}(b)}i\widehat{x}(g)\lambda(g),\enspace x \in F^{n+1}(G)$$
 \item[$(ii)$] For all $t \in \mathbb{R} $, $$T_b(M_t^b(x))=M_t^b(T_b(x)),\enspace x \in F^n(G)$$
    \end{enumerate}

\end{proposition}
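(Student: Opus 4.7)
The approach is to exploit that both $T_b$ and $\delta_b^{\mathcal{C}}$ act diagonally on Fourier coefficients, so the proposition reduces to coefficient bookkeeping plus the injectivity of the Fourier transform on $\text{vN}(G)$. The two key ingredients are: $\widehat{T_b(x)}(g)=b(g)^{-1}\widehat{x}(g)$ for $g\not\in\text{ker}(b)$ and $0$ otherwise (from the remark following Definition~\ref{3.7}), and for $y\in F^1(G)$, $\widehat{\delta_b^{\mathcal{C}}(y)}(g)=ib(g)\widehat{y}(g)$ with $\delta_b^{\mathcal{C}}(y)\in CF(G)$ (Corollary~\ref{3.4}).

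For the containment in $(i)$, fix $x\in F^{n+1}(G)$ and $k\in\{0,1,\ldots,n\}$. Substituting the formula for $\widehat{T_b(x)}$ rewrites the two families of series in Definition~\ref{3.1} applied to $T_b(x)$ as
\begin{equation*}
\sum_{g\not\in\text{ker}(b)} b(g)^{k-1}\widehat{x}(g)\lambda(g)\quad\text{and}\quad\sum_{g\not\in\text{ker}(b)} b(g)^{-(k+1)}\widehat{x}(g)\lambda(g).
\end{equation*}
The second series is covered directly by $x\in F^{n+1}(G)$ since the exponent $k+1$ ranges over $\{1,\ldots,n+1\}$. For the first, the $k=0$ case is the defining series of $T_b(x)$, while for $k\geq 2$ it coincides with the convergent $F^{n+1}$-series $\sum_g b(g)^{k-1}\widehat{x}(g)\lambda(g)$ (the $\text{ker}(b)$-part vanishes). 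The genuine boundary case is $k=1$, which reduces to $\sum_{g\not\in\text{ker}(b)}\widehat{x}(g)\lambda(g)$; I would obtain its convergence by applying part $(i)$ of Proposition~\ref{2.1} to $x=\sum_g\widehat{x}(g)\lambda(g)\in CF(G)$ with multiplier $\chi_{G\setminus\text{ker}(b)}\in\ell^{\infty}(G)$.

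For the commutation identity in $(i)$, the containment just established places $T_b(x)$ in $F^n(G)\subseteq F^1(G)$, so Corollary~\ref{3.4} gives $\widehat{\delta_b^{\mathcal{C}}(T_b(x))}(g)=ib(g)\cdot b(g)^{-1}\widehat{x}(g)=i\widehat{x}(g)$ off $\text{ker}(b)$ and $0$ inside. Symmetrically, Proposition~\ref{3.5} places $\delta_b^{\mathcal{C}}(x)$ in $F^n(G)\subseteq F^1(G)$, and the defining formula for $T_b$ assigns the same Fourier coefficients to $T_b(\delta_b^{\mathcal{C}}(x))$. Since both elements lie in $CF(G)$, each equals the norm-convergent series $\sum_{g\not\in\text{ker}(b)} i\widehat{x}(g)\lambda(g)$, and the identity follows.

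For $(ii)$, let $x\in F^n(G)$ and $t\in\mathbb{R}$. Proposition~\ref{3.2} gives $M_t^b(x)\in F^n(G)\subseteq F^1(G)$, so $T_b(M_t^b(x))$ is defined. A one-line computation using $\widehat{M_t^b(y)}(g)=e^{tib(g)}\widehat{y}(g)$ and the formula for $\widehat{T_b(y)}$ shows that both $T_b(M_t^b(x))$ and $M_t^b(T_b(x))$ have Fourier coefficients equal to $e^{tib(g)}b(g)^{-1}\widehat{x}(g)$ for $g\not\in\text{ker}(b)$ and zero for $g\in\text{ker}(b)$ (using $e^{tib(g)}=1$ there); injectivity of the Fourier transform on $\text{vN}(G)$ then identifies the two elements. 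The only mildly delicate moment in the whole argument is the $k=1$ convergence in $(i)$, which is the single step that cannot be read directly off the $F^{n+1}$ hypothesis and requires the multiplier principle in Proposition~\ref{2.1}$(i)$.
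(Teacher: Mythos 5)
Your proof is correct and follows essentially the same route as the paper: everything is reduced to the Fourier-coefficient formulas for $T_b$, $\delta_b^{\mathcal{C}}$ and $M_t^b$, together with the convergence conditions defining $F^{n+1}(G)$ (the paper manipulates the norm-convergent series directly where you invoke injectivity of the Fourier transform, but these are interchangeable here). The one quibble is that your ``delicate'' $k=1$ case is in fact read directly off the hypothesis --- it is the $k=0$ instance of the second bullet of Definition~\ref{3.1} applied to $x\in F^{n+1}(G)$ --- so the detour through Proposition~\ref{2.1}$(i)$, while valid, is unnecessary.
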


\begin{proof}

Let $n \in \mathbb{N}$.

    $(i)$ Let $x \in F^{n+1}(G)$. To prove that $T_b(x) \in F^n(G)$, let $k \in \{0,1,2,...,n\}$. We need to show that the series

    \begin{align*}
        \sum_{g \in G}b(g)^k\widehat{T_b(x)}(g)\lambda(g)=\sum_{g \not \in \text{ker}(b)}b(g)^{k-1}\widehat{x}(g)\lambda(g)
    \end{align*}
    and 

    \begin{align*}
        \sum_{g \not \in \text{ker}(b)}b(g)^{-k}\widehat{T_b(x)}(g)\lambda(g)=\sum_{g \not \in \text{ker}(b)}b(g)^{-(k+1)}\widehat{x}(g)\lambda(g)
    \end{align*}
    are both convergent w.r.t. $\|\cdot\|$. This follows from the assumption that $x \in F^{n+1}(G)$. This proves $T_b(F^{n+1}(G)) \subseteq F^n(G)$.

Next, let $x \in F^{n+1}(G)$, then the above argument also makes $\delta_b^{\mathcal{C}}(T_b(x))$ a well-defined element in $C_r^*(G)$. From Proposition \ref{3.5}, $T_b(\delta_b^{\mathcal{C}}(x))$ is also a well-defined element in $C_r^*(G)$.

For the equalities in $(i)$, we compute (all series being convergent w.r.t. $\|\cdot\|$):

\begin{align*}
    \delta_b^{\mathcal{C}}(T_b(x))&=\sum_{g \in G}ib(g)\widehat{T_b(x)}(g)\lambda(g)=\sum_{g \not \in \text{ker}(b)}ib(g)b(g)^{-1}\widehat{x}(g)\lambda(g)=\sum_{g \not \in \text{ker}(b)}i\widehat{x}(g)\lambda(g).
\end{align*}

Similarly, using Corollary \ref{3.4}, we get:

\begin{align*}
    T_b(\delta_b^{\mathcal{C}}(x))&=\sum_{g \not \in \text{ker}(b)}b(g)^{-1}\widehat{H_b^{\mathcal{C}}(x)}(g)\lambda(g)=\sum_{g \not \in \text{ker}(d)}b(g)^{-1}ib(g)\widehat{x}(g)\lambda(g)=\sum_{g \not \in \text{ker}(b)}i\widehat{x}(g)\lambda(g).
\end{align*}

\vspace{.1in}

$(ii)$ Let $t \in \mathbb{R}$ and $x \in F^n(G)$. By Proposition \ref{3.2}, $M_t^b(x) \in F^n(G)$ so $T_b(M_t^b(x))$ is well-defined. Since $\widehat{M_t^b(x)}(g)=e^{tib(g)}\widehat{x}(g)$ for all $g \in G$, by the definition of the map $T_b$, we have $$T_b(M_t^b(x))=\sum_{g \not \in \text{ker}(b)}b(g)^{-1}\widehat{M_t^b(x)}(g)\lambda(g)=\sum_{g \not \in \text{ker}(b)}b(g)^{-1}e^{tib(g)}\widehat{x}(g)\lambda(g).$$Next, since $$T_b(x)=\sum_{g \not \in \text{ker}(b)}b(g)^{-1}\widehat{x}(g)\lambda(g),$$with the series being convergent w.r.t. $\|\cdot\|$ and the operator $M_t^b$ being bounded, we get
\begin{align*}
    M_t^b(T_b(x))&=M_t^b\bigg(\sum_{g \not \in \text{ker}(b)}(b(g))^{-1}\widehat{x}(g)\lambda(g) \bigg)\\&=\sum_{g \not \in \text{ker}(b)}M_t^b\bigg((b(g))^{-1}\widehat{x}(g)\lambda(g)\bigg)\\&=\sum_{g \not \in \text{ker}(b)}(b(g))^{-1}e^{tib(g)}\widehat{x}(g)\lambda(g)\\&=T_b(M_t^b(x)).
\end{align*}
\end{proof}

Finally, we will also need the following map as a tool.

\begin{definition}\label{3.9}
    For $x \in CF(G)$, define $S_b:CF(G)\to C_r^*(G)$ by$$S_b(x)=\sum_{g \in \text{ker}(b)}\widehat{x}(g)\lambda(g)\enspace \big(\text{convergence w.r.t. }\|\cdot\|\big)$$
\end{definition}

By $(ii)$ of Proposition \ref{2.1} and the linearity of the map $x\mapsto \widehat{x}$, $S_b$ is a well-defined linear map. By Proposition \ref{2.2}, the Fourier coefficients of $S_b(x)$ for $x \in CF(G)$ are

\[\widehat{S_b(x)}(g)= \begin{cases} 
      \widehat{x}(g) & \text{ if }g \in \text{ker}(b) \\
      0 & \text{ if }g \not \in \text{ker}(b) 
   \end{cases}
\]and $S_b(x) \in CF(G)$.

\begin{proposition}\label{3.10}
    Let $n \in \mathbb{N}$. Then $S_b(F^n(G)) \subseteq F^n(G)$. Moreover, the following statements are true:
    \begin{enumerate}
        \item[$(i)$] $$\delta_b^{\mathcal{C}}(S_b(x))=S_b(\delta_b^{\mathcal{C}}(x))=0,\enspace x\in F^1(G).$$
        \item[$(ii)$] For $t\in \mathbb{R}$, $$M_t^b(S_b(x))=S_b(M_t^b(x))=S_b(x),\enspace x \in CF(G).$$
        \item[$(iii)$]  $$T_b(S_b(x))=S_b(T_b(x))=0,\enspace x \in F^1(G).$$
    \end{enumerate}
\end{proposition}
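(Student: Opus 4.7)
The plan is to reduce every assertion to a calculation with Fourier coefficients, exploiting the two features of $S_b$ built into Definition \ref{3.9}: namely that $\widehat{S_b(x)}$ is supported on $\text{ker}(b)$ with value $\widehat{x}$ there, and that $b$ vanishes identically on $\text{ker}(b)$. For the containment $S_b(F^n(G)) \subseteq F^n(G)$, I would fix $x \in F^n(G)$ and $k \in \{0, 1, \ldots, n\}$ and inspect the two series required by Definition \ref{3.1}: the first, $\sum_{g \in G} b(g)^k \widehat{S_b(x)}(g)\lambda(g)$, is supported on $\text{ker}(b)$ and therefore equals $S_b(x)$ when $k = 0$ (which converges by assumption) and vanishes identically when $k \geq 1$ (since $b(g)^k = 0$ on $\text{ker}(b)$); the second, $\sum_{g \notin \text{ker}(b)} b(g)^{-k} \widehat{S_b(x)}(g)\lambda(g)$, is identically zero because $\widehat{S_b(x)}$ vanishes off $\text{ker}(b)$.

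Parts (i) and (iii) are essentially support arguments. For (i), I would apply Proposition \ref{3.3} to $S_b(x) \in F^1(G)$ to rewrite $\delta_b^{\mathcal{C}}(S_b(x))$ as $\sum_{g \in \text{ker}(b)} ib(g)\widehat{x}(g)\lambda(g)$, which is zero termwise; for the other equality, Corollary \ref{3.4} gives $\widehat{\delta_b^{\mathcal{C}}(x)}(g) = ib(g)\widehat{x}(g)$, which vanishes on $\text{ker}(b)$ and so has zero $S_b$-image. Part (iii) is immediate from disjoint supports: $T_b$ only sums over $g \notin \text{ker}(b)$ where $\widehat{S_b(x)}$ vanishes, and conversely $S_b$ only sums over $\text{ker}(b)$ where $\widehat{T_b(x)}$ vanishes.

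For part (ii), I would push the bounded operator $M_t^b$ through the norm-convergent series defining $S_b(x)$ to obtain $\sum_{g \in \text{ker}(b)} e^{tib(g)}\widehat{x}(g)\lambda(g)$, and use $e^{tib(g)} = 1$ on $\text{ker}(b)$ to recognize this as $S_b(x)$. For the equality $S_b(M_t^b(x)) = S_b(x)$, I first need $M_t^b(x) \in CF(G)$ so that $S_b$ applies; this follows from Proposition \ref{2.1}(i), applied to the norm-convergent series $\sum_{g \in G} \widehat{x}(g)\lambda(g)$ with the bounded multiplier $g \mapsto e^{tib(g)}$. Having confirmed this, $\widehat{M_t^b(x)}(g) = e^{tib(g)}\widehat{x}(g)$ agrees with $\widehat{x}(g)$ on $\text{ker}(b)$, giving the claim. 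The only real obstacle is this routine domain bookkeeping — ensuring each composition is well-defined — which is settled by the containment proved at the start together with the preservation of $CF(G)$ under $M_t^b$.
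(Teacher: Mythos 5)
Your proposal is correct and follows essentially the same route as the paper's proof: every claim is reduced to a Fourier-coefficient/support computation, using that $\widehat{S_b(x)}$ is supported on $\ker(b)$ where $b$ vanishes, and pushing the bounded operator $M_t^b$ through the norm-convergent series for part (ii). Your explicit check that $M_t^b(x)\in CF(G)$ (so that $S_b(M_t^b(x))$ is defined) via Proposition \ref{2.1}(i) is a small point the paper leaves implicit, but otherwise the arguments coincide.
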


\begin{proof}
    Let $n \in \mathbb{N}$ and $x \in F^n(G)$. To prove that $S_b(x) \in F^n(G)$, let $k \in \{0,1,2,...,n\}$. Then $$\sum_{g \in G}b(g)^{k}\widehat{S_b(x)}(g)\lambda(g)=\sum_{g \in \text{ker}(b)}b(g)^{k}\widehat{x}(g)\lambda(g)$$which is convergent w.r.t. $\|\cdot\|$ since $x \in F^n(G)$. In addition, $$\sum_{g \not \in \text{ker}(b)}b(g)^{-k}\widehat{S_b(x)}(g)\lambda(g)=0.$$Hence, $S_b(x) \in F^n(G)$.

\vspace{.1in}

$(i)$ Let $x \in F^1(G)$. Then $S_b(x) \in F^1(G)$. Using Proposition \ref{3.3} we get

\begin{align*}
    \delta_b^{\mathcal{C}}(S_b(x))&=\sum_{g \in G}ib(g)\widehat{S_b(x)}(g)\lambda(g)=\sum_{g \in \text{ker}(b)}ib(g)\widehat{x}(g)\lambda(g)=0.
\end{align*}

Moreover, $\delta_b^{\mathcal{C}}(x) \in CF(G)$, so $S_b(\delta_b^{\mathcal{C}}(x))$ is defined, and using Corollary \ref{3.4}, we get

\begin{align*}
    S_b(\delta_b^{\mathcal{C}}(x))&=\sum_{g \in \text{ker}(b)}\widehat{\delta_b^{\mathcal{C}}(x)}(g)\lambda(g)=\sum_{g \in \text{ker}(b)}ib(g)\widehat{x}(g)\lambda(g)=0.
\end{align*}

$(ii)$ Let $t \in \mathbb{R}$ and $x \in F^n(G)$. Since $S_b(x)=\sum_{g \in \text{ker}(b)}\widehat{x}(g)\lambda(g)$ with the series being convergent w.r.t. $\|\cdot\|$, using the boundedness of the operator $M_t^b$, we get
\begin{align*}
    M_t^b(S_b(x))&=M_t^b\bigg(\sum_{g \in \text{ker}(b)}\widehat{x}(g)\lambda(g) \bigg)\\&=\sum_{g \in \text{ker}(b)}M_t^b\bigg(\widehat{x}(g)\lambda(g)\bigg)\\&=\sum_{g \in \text{ker}(b)}e^{tib(g)}\widehat{x}(g)\lambda(g)\\&=\sum_{g \in \text{ker}(b)}\widehat{x}(g)\lambda(g)\\&=S_b(x)
\end{align*}

Finally, using the formula for the Fourier coefficients of $M_t^b(x)$, we get

\begin{align*}
    S_b(M_t^b(x))&=\sum_{g \in \text{ker}(b)}\widehat{M_t^b(x)}(g)\lambda(g)=\sum_{g \in \text{ker}(b)}e^{tib(g)}\widehat{x}(g)\lambda(g)=\sum_{g \in \text{ker}(b)}\widehat{x}(g)\lambda(g)=S_b(x).
\end{align*}

$(iii)$ This follows from the definitions of $T_b(x),S_b(x)$ and the formulas for their Fourier coefficients.

\end{proof}

\begin{theorem}
    Let $t\geq 0$. Then the maps $$\delta_b^{\mathcal{C}},M_t^b,T_b,\text{ and }S_b$$ are all well-defined on $F^{2}(G)$ and any pair of them commute with each other on $F^{2}(G)$.
\end{theorem}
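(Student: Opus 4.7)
The plan is to confirm first that $F^2(G)$ lies in the domain of each of the four maps, and then to verify each of the $\binom{4}{2}=6$ pairwise commutativity relations by invoking results already established in this section; essentially the entire work has been done in the preceding propositions, and this theorem is a consolidation.

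For the well-definedness step, $M_t^b$ is bounded on all of $C_r^*(G)$ and so acts on $F^2(G)$; $T_b$ has domain $F^1(G)\supseteq F^2(G)$ by Definition \ref{3.7}; $S_b$ has domain $CF(G)$, and the observation recorded after Definition \ref{3.1} gives $F^2(G)\subseteq CF(G)$; and $\delta_b^{\mathcal{C}}$ has domain $\mathcal{C}$, which contains $F^1(G)$ by Proposition \ref{3.3}, and hence contains $F^2(G)$. For each of the six pairwise commutations I first check that both compositions make sense, i.e., that the image of one map lies in the domain of the other. The only non-trivial cases are those involving $\delta_b^{\mathcal{C}}$ composed with $T_b$: here I use Proposition \ref{3.5} (which gives $\delta_b^{\mathcal{C}}(F^2(G))\subseteq F^1(G)$) and Proposition \ref{3.8}(i) (which gives $T_b(F^2(G))\subseteq F^1(G)$) to land in the other's domain. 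In the remaining cases, $M_t^b$ preserves $F^n(G)$ by Proposition \ref{3.2} and $S_b$ preserves $F^n(G)$ by Proposition \ref{3.10}, so the compositions stay within the appropriate subspaces.

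With well-definedness secured, the commutation on $F^2(G)$ for each pair is supplied directly by an earlier result: for $\delta_b^{\mathcal{C}}$ with $M_t^b$ by the general $C_0$-(semi)group identity $M_t^b\circ\delta_b^{\mathcal{C}}=\delta_b^{\mathcal{C}}\circ M_t^b$ on $\mathcal{C}$ noted earlier in this section; for $\delta_b^{\mathcal{C}}$ with $T_b$ by Proposition \ref{3.8}(i); for $\delta_b^{\mathcal{C}}$ with $S_b$ by Proposition \ref{3.10}(i) (both compositions vanish); for $M_t^b$ with $T_b$ by Proposition \ref{3.8}(ii); for $M_t^b$ with $S_b$ by Proposition \ref{3.10}(ii); and for $T_b$ with $S_b$ by Proposition \ref{3.10}(iii) (both compositions vanish). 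I do not expect any genuine obstacle: the theorem is a bookkeeping consolidation of the commutation identities proved individually in Propositions \ref{3.8} and \ref{3.10}, and the only care required is tracking domains so that each composition is legitimate. The choice $\mathcal{D}=F^2(G)$ in Definition \ref{3.6} is precisely what guarantees that all four maps act and all six compositions stay inside appropriate domains.
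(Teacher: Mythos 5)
Your proof is correct and takes exactly the route the paper intends: the paper's own proof is just the one line ``This follows from the previous results,'' and your write-up is the careful expansion of that, matching each of the six pairs to the commutation identity already proved (the $\mathcal{C}$-invariance identity for $M_t^b$ and $\delta_b^{\mathcal{C}}$, Proposition \ref{3.8} for the pairs involving $T_b$ and $M_t^b$ or $\delta_b^{\mathcal{C}}$, and Proposition \ref{3.10} for the pairs involving $S_b$) together with the right domain checks via Propositions \ref{3.2}, \ref{3.5}, \ref{3.8}(i), and \ref{3.10}. No gaps.
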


\begin{proof}
    This follows from the previous results.
\end{proof}

\section{The Wave Equation}

We have now developed all the necessary machinery for the formulation and justification of the wave problem on $C_r^*(G)$.

We recall that $b:G \to (\mathbb{R},+)$ is a nonzero homorphism, $d:=b^2$, and $H_d^{\mathcal{D}}:=\delta_b^{\mathcal{C}}\circ \delta_b^{\mathcal{C}}$ is defined on $\mathcal{D}:=F^2(G)$.

\begin{theorem}\label{4.1}
    Let $x_0 \in F^2(G)$, $y_0 \in F^3(G)$. For $t\geq 0$, set $$u_1(t):=t\cdot S_b(y_0),\enspace u_2(t):= \frac{M_t^b(x_0)+M_{-t}^b(x_0)}{2},\enspace u_3(t):=\frac{M_t^b(T_b(y_0))-M_{-t}^b(T_b(y_0))}{2i},\text{ and }$$
 $$u(t):=u_1(t)+u_2(t)+u_3(t).$$Then $u(t)$ is a solution to the wave problem on $C_r^*(G)$ associated to $H_{d}^{\mathcal{D}}$ and $x_0 ,y_0 $.
\end{theorem}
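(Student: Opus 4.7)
The plan is to verify the five conditions in the definition of a solution in order, treating the three pieces $u_1, u_2, u_3$ separately and then adding. Throughout, the work is essentially calculus-on-a-Banach-space, with all well-definedness issues handled by the invariance results of Section 3.

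First I would check membership $u(t) \in \mathcal{D} = F^2(G)$ for every $t \geq 0$. Since $y_0 \in F^3(G) \subseteq F^2(G)$, Proposition \ref{3.10} gives $S_b(y_0) \in F^2(G)$, hence $u_1(t) \in F^2(G)$. Since $x_0 \in F^2(G)$, Proposition \ref{3.2} gives $M_{\pm t}^b(x_0) \in F^2(G)$, so $u_2(t) \in F^2(G)$. Finally, by (i) of Proposition \ref{3.8}, $T_b(y_0) \in F^2(G)$, and applying Proposition \ref{3.2} again places $u_3(t) \in F^2(G)$. The value $u(0) = 0 + x_0 + 0 = x_0$ is immediate from $M_0^b = \mathrm{id}$.

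Next I would differentiate. The key facts are: for $z \in \mathcal{C}$, the maps $t \mapsto M_t^b(z)$ and $t \mapsto M_{-t}^b(z)$ are norm-differentiable with derivatives $\delta_b^{\mathcal{C}}(M_t^b(z)) = M_t^b(\delta_b^{\mathcal{C}}(z))$ and $-M_{-t}^b(\delta_b^{\mathcal{C}}(z))$ respectively (from the initial value problems stated in Section 3 together with the commutativity theorem at the end of Section 3). Since $x_0 \in F^2(G)$, Proposition \ref{3.5} gives $\delta_b^{\mathcal{C}}(x_0) \in F^1(G) \subseteq \mathcal{C}$, so differentiating again is legal; similarly $T_b(y_0) \in F^2(G)$ yields $\delta_b^{\mathcal{C}}(T_b(y_0)) \in \mathcal{C}$. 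This is precisely why the hypothesis $y_0 \in F^3(G)$ (rather than $F^2(G)$) is needed, and it is the one place where the domain bookkeeping must be checked carefully. The resulting formulas are
\begin{align*}
u_1'(t) &= S_b(y_0), \qquad u_1''(t) = 0, \\
u_2'(t) &= \tfrac{1}{2}\bigl(M_t^b - M_{-t}^b\bigr)(\delta_b^{\mathcal{C}}(x_0)), \qquad u_2''(t) = \tfrac{1}{2}\bigl(M_t^b + M_{-t}^b\bigr)(H_d^{\mathcal{D}}(x_0)), \\
u_3'(t) &= \tfrac{1}{2i}\bigl(M_t^b + M_{-t}^b\bigr)\bigl(\delta_b^{\mathcal{C}}(T_b(y_0))\bigr), \qquad u_3''(t) = \tfrac{1}{2i}\bigl(M_t^b - M_{-t}^b\bigr)\bigl(H_d^{\mathcal{D}}(T_b(y_0))\bigr).
\end{align*}

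The wave identity $u''(t) = H_d^{\mathcal{D}}(u(t))$ then follows by pulling $H_d^{\mathcal{D}}$ past $M_{\pm t}^b$ (commutativity theorem) and noting $H_d^{\mathcal{D}}(S_b(y_0)) = 0$ by (i) of Proposition \ref{3.10}. For the final two initial conditions, I would use norm-continuity of $t \mapsto M_t^b(z)$ to see that $u(t) \to u(0) = x_0$ and $u'(t) \to u'(0^+)$; evaluating the $u_i'$ formulas at $t=0$ gives $u'(0^+) = S_b(y_0) + 0 + \tfrac{1}{i}\delta_b^{\mathcal{C}}(T_b(y_0))$, and by Proposition \ref{3.8}(i) the last term equals $\sum_{g \notin \ker(b)} \widehat{y_0}(g)\lambda(g) = y_0 - S_b(y_0)$ (using that $y_0 \in F^1(G) \subseteq CF(G)$ so its Fourier series splits over $\ker(b)$ and its complement by Proposition \ref{2.1}(ii)). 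The sum collapses to $y_0$, completing the verification. The main obstacle is purely one of careful domain tracking at the second-differentiation step; once that is set up the wave identity reduces to the algebraic manipulation described above.
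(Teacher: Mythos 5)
Your proposal is correct and follows essentially the same route as the paper's proof: the same term-by-term verification of the five conditions, the same invariance lemmas for domain bookkeeping, the same use of the generator identities $\frac{d}{dt}M_{\pm t}^b(z)=\pm\delta_b^{\mathcal{C}}(M_{\pm t}^b(z))$ together with $\delta_b^{\mathcal{C}}(x_0),\,\delta_b^{\mathcal{C}}(T_b(y_0))\in F^1(G)\subseteq\mathcal{C}$ for the second differentiation, and the same splitting of the Fourier series of $y_0$ over $\ker(b)$ and its complement to recover the initial velocity. The only (immaterial) difference is that you write $u_i''$ as $M_{\pm t}^b$ applied to $H_d^{\mathcal{D}}$ of the initial data and then invoke commutativity, whereas the paper writes it directly as $\delta_b^{\mathcal{C}}\circ\delta_b^{\mathcal{C}}$ applied to $u_i(t)$.
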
    
    
\begin{remark}   
In the classical situation, the initial velocity is usually assumed to be $C^2$. Here the initial velocity is represented by $y_0$ which is assumed to be in $F^3(G)$. This in short, is due to the fact that in the classical case, the term $1/n$ that appears in the Fourier series for $\widehat{g_0}$, helps convergence, whereas here, the corresponding term is $1/b(g)$ which does not help convergence when appearing in a series, as $b$ is only assumed to be a nonzero homomorphism.

\end{remark}

\begin{proof}

For the proof of the theorem, we need to show that this $u:[0,\infty) \to C_r^*(G)$ satisfies the following conditions:

\begin{enumerate}
    \item $u(0)=x_0$,\label{item 1}
    \item $u(t) \in \mathcal{D}$ for all $t>0$,\label{item 2}
    \item $u$ is twice differentiable on $(0,\infty)$ with $u^{\prime\prime}(t)=H_{d}^{\mathcal{D}}(u(t))$, $t>0$.\label{item 3}
    \item $\lim_{t\to 0^+}\|u(t)-x_0\|=0$,\label{item 4}
    \item $\lim_{t\to 0^+}\|u^{\prime}(t)-y_0\|=0$,\label{item 5}
\end{enumerate}



Given that $x_0 \in F^2(G)$ and $y_0 \in F^3(G)$, it is then clear that each expression in the definition of $u(t)$ is well-defined for each $t\geq 0$.

Now we check each of the conditions separately.

\begin{enumerate}
    \item It is clear from the direct substitution of $t=0$ that the function $u(t)$ satisfies condition (\ref{item 1}).

    \item For $t>0$, since $y_0 \in F^3(G) \subseteq F^2(G)$, which is invariant under the linear map $S_b$, we immediately have that $u_1(t) \in \mathcal{D}$. Similarly, because $x_0$ is assumed to be in $F^2(G)$, which is invariant under the operator $M_t^b$ for all $t\in \mathbb{R}$, from the definition, it follows that $u_2(t) \in \mathcal{D}$. The same reasoning together with proposition \ref{3.8} shows that $u_3(t)\in \mathcal{D}$. So condition (\ref{item 2}) is satisfied.
    \item First, it is clear that $u_1(t)$ is twice differentiable with respect to $t$ with $u_1^{\prime\prime}(t)=0$. From Proposition \ref{3.10} we get that

\begin{align*}
    H_{d}^{\mathcal{D}}(u_1(t))&=\delta_{b}^{\mathcal{C}}(\delta_b^{\mathcal{C}}(tS_b(y_0)))=t\delta_b^{\mathcal{C}}(\delta_b^{\mathcal{C}}(S_b(y_0)))=0
\end{align*}

So $u_1^{\prime\prime}(t)=H_{d}^{\mathcal{D}}(u_1(t))$.

    Next, recall that for each $x \in \mathcal{C}$, the maps $t\mapsto M_t^b(x)$ and $t\mapsto M_{-t}^b(x)$ are differentiable on $(0,\infty)$ and satisfy $$\frac{d}{dt}M_t^b(x)=\delta_b^{\mathcal{C}}(M_t^b(x))\enspace \text{ and }\enspace \frac{d}{dt}M_{-t}^b(x)=-\delta_b^{\mathcal{C}}(M_{-t}^b(x)).$$

    Let $t>0$. As $x_0$ belongs to $ F^2(G)$, which is contained in $\mathcal{C}$, we can compute: 

\begin{align*}
    u_2^{\prime}(t)&=\frac{d}{dt}\bigg( \frac{M_t^b(x_0)+M_{-t}^b(x_0)}{2}\bigg)\\&=\frac{\frac{d}{dt}M_t^b(x_0)+\frac{d}{dt}M_{-t}^b(x_0)}{2}\\&=\frac{\delta_b^{\mathcal{C}}(M_t^b(x_0))-\delta_b^{\mathcal{C}}(M_{-t}^b(x_0))}{2}\\&=\frac{M_t^b(\delta_b^{\mathcal{C}}(x_0))-M_{-t}^b(\delta_b^{\mathcal{C}}(x_0))}{2}.
\end{align*}

Further, $u_2^{\prime}(t)$ is differentiable with respect to $t$ since $\delta_b^{\mathcal{C}}(x_0) \in \mathcal{C}$, and we get

\begin{align*}
    u_2^{\prime\prime}(t)&=\frac{d}{dt}u_2^{\prime}(t)\\&=\frac{\frac{d}{dt}M_t^b(\delta_b^{\mathcal{C}}(x_0))-\frac{d}{dt}M_{-t}^b(\delta_b^{\mathcal{C}}(x_0))}{2}\\&=\frac{\delta_b^{\mathcal{C}}(M_t^b(\delta_b^{\mathcal{C}}(x_0)))+\delta_b^{\mathcal{C}}(M_{-t}^b(\delta_b^{\mathcal{C}}(x_0)))}{2}\\&=\delta_b^{\mathcal{C}}\bigg(\delta_{b}^{\mathcal{C}}\bigg(\frac{M_t^b(x_0)+M_{-t}^b(x_0)}{2} \bigg)\bigg)\\&=H_{d}^{\mathcal{D}}(u_2(t)).
\end{align*}

For $u_3(t)$, since $y_0 \in F^3(G)$, $T_d(y_0) \in F^2(G) \subseteq \mathcal{C}$, we can compute similarly:

\begin{align*}
    u_3^{\prime}(t)&= \frac{d}{dt}\bigg(\frac{M_t^b(T_b(y_0))-M_{-t}^b(T_b(y_0))}{2i} \bigg)\\&=\frac{\frac{d}{dt}M_t^b(T_b(y_0))-\frac{d}{dt}M_{-t}^b(T_b(y_0))}{2i}\\&=\frac{\delta_b^{\mathcal{C}}(M_t^b(T_b(y_0)))+\delta_{b}^{\mathcal{C}}(M_{-t}^b(T_b(y_0)))}{2i}\\&=\frac{M_t^b(\delta_b^{\mathcal{C}}(T_b(y_0)))+M_{-t}^b(\delta_b^{\mathcal{C}}(T_b(y_0)))}{2i}
\end{align*}

Since $\delta_b^{\mathcal{C}}(T_b(y_0)) \in \mathcal{C}$, $u_3^{\prime}(t)$ is differentiable with respect to $t$ and 

\begin{align*}
    u_3^{\prime\prime}(t)&=\frac{\frac{d}{dt}M_t^b(\delta_b^{\mathcal{C}}(T_b(y_0)))+\frac{d}{dt}M_{-t}^b(\delta_b^{\mathcal{C}}(T_b(y_0)))}{2i}\\&=\frac{\delta_b^{\mathcal{C}}(M_t^b(\delta_b^{\mathcal{C}}(T_b(y_0))))-\delta_b^{\mathcal{C}}(M_{-t}^b(\delta_b^{\mathcal{C}}(T_b(y_0))))}{2i}\\&=\delta_b^{\mathcal{C}}\bigg(\delta_b^{\mathcal{C}}\bigg( \frac{M_t^b(T_b(y_0))-M_{-t}^b(T_b(y_0))}{2i}\bigg) \bigg)\\&=H_d^{\mathcal{D}}(u_3(t)).
\end{align*}

Combining these equalities we have shown that $u(t)$ satisfies condition (\ref{item 3}).

    \item 

    This follows directly from the continuity property of the one-parameter group $\{M_t^b\}_{t \in \mathbb{R}}$.

\item Using what we have computed above, it is clear that $$\lim_{t\to 0^+}u_1^{\prime}(t)=S_d(y_0)=\sum_{g \in \text{ker}(d)}\widehat{y_0}(g)\lambda(g)\enspace \text{ and }\lim_{t\to 0^+}u_2^{\prime}(t)=0.$$In addition,

\begin{align*}
    \lim_{t\to 0^+}u_3(t)&=\frac{\delta_b^{\mathcal{C}}(T_b(y_0))+\delta_b^{\mathcal{C}}(T_b(y_0))}{2i}=\sum_{g \not \in \text{ker}(b)}\widehat{y_0}(g)\lambda(g)
\end{align*}
Putting these together:

\begin{align*}
    \lim_{t\to 0^+}u^{\prime}(t)&=\sum_{g \in \text{ker}(b)}\widehat{y_0}(g)\lambda(g)+0+\sum_{g \not \in \text{ker}(b)}\widehat{y_0}(g)\lambda(g)=\sum_{g \in G}\widehat{y_0}(g)\lambda(g)=y_0
\end{align*}
This finishes proving item \ref{item 5} and the proof of the theorem.

\end{enumerate}

\end{proof}

For the uniqueness of the solution, the following result is an intermediate step. It picks out the Fourier coefficient of $u(t)$ for each $t\geq 0$ where $u(t)$ is as in Theorem \ref{4.1}. 

\begin{proposition}\label{3.4.3}
    Let $x_0,y_0$ and $u(t)$ be as in Theorem \ref{4.1}. 
    
    If $g  \in \text{ker}(b)$, then $$\widehat{u(t)}(g)=\widehat{x_0}(g)+t\cdot \widehat{y_0}(g)$$

    If $g \not \in \text{ker}(b)$, then

    \begin{align*}
        \widehat{u(t)}(g)&=\cos(tb(g))\widehat{x_0}(g)+\frac{\sin(tb(g))}{b(g)}\widehat{y_0}(g)
    \end{align*}

\end{proposition}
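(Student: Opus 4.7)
The plan is to exploit the linearity of the Fourier transform $x \mapsto \widehat{x}$ together with the three explicit Fourier-coefficient formulas already established in Section 3, namely $\widehat{M_t^b(x)}(g) = e^{tib(g)}\widehat{x}(g)$, the piecewise formula for $\widehat{T_b(y_0)}(g)$ (from the discussion after Definition \ref{3.7}), and the piecewise formula for $\widehat{S_b(y_0)}(g)$ (from after Definition \ref{3.9}). By linearity, I would write
\[
\widehat{u(t)}(g) = \widehat{u_1(t)}(g) + \widehat{u_2(t)}(g) + \widehat{u_3(t)}(g)
\]
and evaluate each term separately.

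For $u_1(t) = t \cdot S_b(y_0)$, the formula for the Fourier coefficients of $S_b(y_0)$ immediately gives $\widehat{u_1(t)}(g) = t \widehat{y_0}(g)$ when $g \in \ker(b)$ and $0$ otherwise. For $u_2(t)$, applying $\widehat{M_{\pm t}^b(x_0)}(g) = e^{\pm tib(g)}\widehat{x_0}(g)$ and Euler's formula yields
\[
\widehat{u_2(t)}(g) = \frac{e^{tib(g)} + e^{-tib(g)}}{2}\widehat{x_0}(g) = \cos(tb(g))\widehat{x_0}(g),
\]
which reduces to $\widehat{x_0}(g)$ when $g \in \ker(b)$. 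For $u_3(t)$, combining $\widehat{M_{\pm t}^b(T_b(y_0))}(g) = e^{\pm tib(g)}\widehat{T_b(y_0)}(g)$ with the explicit value of $\widehat{T_b(y_0)}(g)$, a similar Euler computation gives $\widehat{u_3(t)}(g) = \frac{\sin(tb(g))}{b(g)}\widehat{y_0}(g)$ when $g \notin \ker(b)$, and $\widehat{u_3(t)}(g) = 0$ otherwise.

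Finally, I would split into the two cases stated in the proposition and add the three contributions. For $g \in \ker(b)$, only the $u_1$ and $u_2$ pieces survive and give $\widehat{x_0}(g) + t\widehat{y_0}(g)$; for $g \notin \ker(b)$, only the $u_2$ and $u_3$ pieces contribute, producing the claimed $\cos(tb(g))\widehat{x_0}(g) + \frac{\sin(tb(g))}{b(g)}\widehat{y_0}(g)$.

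There is no serious obstacle here: the argument is essentially bookkeeping once the coefficient formulas are in place. The only point worth noting is that the coefficient extraction $\widehat{\cdot}(g)$ passes through each defining expression for $u_1(t), u_2(t), u_3(t)$ by linearity, which is immediate since $x \mapsto \widehat{x}(g) = \tau(x\lambda(g)^*)$ is a bounded linear functional on $C_r^*(G)$; no further manipulation of the norm-convergent series defining $S_b(y_0)$ and $T_b(y_0)$ is required beyond quoting their previously computed Fourier coefficients.
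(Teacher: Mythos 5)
Your proposal is correct and follows essentially the same route as the paper's proof: decompose $\widehat{u(t)}(g)$ into the three contributions from $u_1, u_2, u_3$, apply the Fourier-coefficient formulas for $M_{\pm t}^b$, $T_b$, and $S_b$ established after Definitions \ref{3.7} and \ref{3.9}, and combine via Euler's formula in each of the two cases. The justification you note for passing $\widehat{\cdot}(g)$ through each term (boundedness and linearity of $x \mapsto \tau(x\lambda(g)^*)$) is exactly what the paper relies on implicitly.
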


\begin{proof}
  
Recall that

\begin{align*}
    u(t)&=\underbrace{t\cdot S_b(y_0)}_{u_1(t)}+\underbrace{\frac{M_t^b(x_0)+M_{-t}^b(x_0)}{2}}_{u_2(t)}+\underbrace{\frac{M_t^b(T_b(y_0))-M_{-t}^b(T_b(y_0))}{2i}}_{u_3(t)}.
\end{align*}

Let $t>0$ and $g \in \text{ker}(b)$.

First it follows directly from the comment after Definition \ref{3.9} that $\widehat{u_1(t)}(g)=t\cdot \widehat{y_0}(g)$.

Next, using the formulas for the Fourier coefficients of $M_t^b(x_0)$ and $M_{-t}^b(x_0)$, the linearity of the map $x\mapsto \widehat{x}(g)$ and keeping in mind that $g \in \text{ker}(b)$, we have $$\widehat{u_2(t)}=\bigg(\frac{M_t^b(x_0)+M_{-t}^b(x_0)}{2} \bigg)^{\widehat{ \enspace}}(g)=\frac{e^{tib(g)}\widehat{x_0}(g)+e^{-tib(g)}\widehat{x_0}(g)}{2}=\widehat{x_0}(g).$$

From the comments following Definition \ref{3.7}, since $g \in \text{ker}(b)$, $M_t^b(T_b(y_0))^{\widehat{\enspace}}(g)=e^{tib(g)}\widehat{T_b(y_0)}(g)=0$, and $M_{-t}^b(T_b(y_0))^{\widehat{\enspace}}(g)=e^{tib(g)}\widehat{T_b(y_0)}(g)=0$, so $\widehat{u_3(t)}(g)=0$.

Hence, when $g \in \text{ker}(b)$, $\widehat{u(t)}(g)=\widehat{x_0}(g)+t\cdot \widehat{y_0}(g)$.

Now suppose $g \not \in \text{ker}(b)$. Using the same reasoning, it is clear that $\widehat{u_1(t)}(g)=0$ and that $$\widehat{u_2(t)}(g)=\frac{e^{tib(g)}\widehat{x_0}(g)+e^{-tib(g)}\widehat{x_0}(g)}{2}=\bigg(\frac{e^{tib(g)}+e^{-tib(g)}}{2} \bigg)\widehat{x_0}(g)=\cos(tb(g))\widehat{x_0}(g).$$Finally

\begin{align*}
    \widehat{u_3(t)}(g)&=\bigg(\frac{M_t^b(T_b(y_0))-M_{-t}^b(T_b(y_0))}{2i} \bigg)^{\widehat{\enspace}}(g)\\&=\frac{e^{tib(g)}\widehat{T_b(y_0)}(g)-e^{-tib(g)}\widehat{T_b(y_0)}(g)}{2i}\\&=\bigg(\frac{e^{tib(g)}-e^{-tib(g)}}{2i}\bigg)(b(g))^{-1}\widehat{y_0}(g)\\&=\frac{\sin(tb(g))}{b(g)}\widehat{y_0}(g)
\end{align*}

Putting these equalities together completes the proof.

\end{proof}

Using this proposition, we can take a look at the form of the solution when expanded in Fourier series. Let $u(t)$ be given as in Theorem \ref{4.1}. For $t>0$, since $u(t) \in \mathcal{D}=F^2(G)$, it belongs to $CF(G)$, so

\begin{align*}
    u(t)&=\sum_{g \in G}\widehat{u(t)}(g)\lambda(g)\\&=\sum_{g \in \text{ker}(b)}\widehat{u(t)}(g)\lambda(g)+\sum_{g \not \in \text{ker}(b)}\widehat{u(t)}(g)\lambda(g)\\&=\sum_{g \in \text{ker}(b)}(\widehat{x_0}(g)+t\widehat{y_0}(g))\lambda(g)+\sum_{g \not \in \text{ker}(b)}\bigg(\cos(tb(g))\widehat{x_0}(g)+\frac{\sin(tb(g))}{b(g)}\widehat{y_0}(g) \bigg)\lambda(g)
\end{align*}

Letting $G = \mathbb{Z}$ and $b:\mathbb{Z} \to (\mathbb{R},+)$ be the function $b(n)=n$, one recovers the solution to the wave problem on $C_r^*(\mathbb{Z})$ and hence on $C(\mathbb{T})$.

The following is the uniqueness result.

\begin{proposition}

 Suppose $x_0 \in F^2(G),y_0 \in F^3(G)$, and $v:[0,\infty) \to C_r^*(G)$ is a solution of the wave problem associated to the operator $H_d^{\mathcal{D}}$ and $x_0,y_0$. Then $v(t)=u(t)$ for all $t\geq 0$ where $u(t)$ is as defined in Theorem \ref{4.1}.

\end{proposition}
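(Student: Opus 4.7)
The plan is to reduce uniqueness to uniqueness of a scalar second-order linear ODE, coefficient by coefficient, and then invoke injectivity of the Fourier transform. For each fixed $g \in G$, the functional $\psi_g : C_r^*(G) \to \mathbb{C}$ given by $\psi_g(x) := \widehat{x}(g) = \tau(x\lambda(g)^*)$ is bounded and therefore commutes with norm limits and with differentiation of norm-differentiable $C_r^*(G)$-valued functions. Setting $f_g(t) := \psi_g(v(t))$, conditions $(1)$, $(4)$ and $(5)$ of the wave problem show that $f_g$ is right-continuous at $0$ with $f_g(0) = \widehat{x_0}(g)$ and $\lim_{t \to 0^+} f_g'(t) = \widehat{y_0}(g)$, while condition $(3)$ yields that $f_g$ is twice differentiable on $(0,\infty)$ with $f_g''(t) = \psi_g(H_d^{\mathcal{D}}(v(t)))$.

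The key computation is to identify $f_g''(t)$ explicitly. Since $v(t) \in \mathcal{D} = F^2(G)$, Corollary \ref{3.4} gives $\widehat{\delta_b^{\mathcal{C}}(v(t))}(g) = ib(g)\widehat{v(t)}(g)$; by Proposition \ref{3.5}, $\delta_b^{\mathcal{C}}(v(t))$ lies in $F^1(G)$, so Corollary \ref{3.4} applies once more to produce
\begin{equation*}
    f_g''(t) = \widehat{H_d^{\mathcal{D}}(v(t))}(g) = (ib(g))^2\widehat{v(t)}(g) = -d(g)\, f_g(t).
\end{equation*}
Hence $f_g$ satisfies the scalar linear ODE $f_g'' + d(g) f_g = 0$ on $(0,\infty)$ with the prescribed right-hand initial data.

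Standard ODE uniqueness (applied on $[\epsilon, T]$ for arbitrary $0 < \epsilon < T$ and letting $\epsilon \to 0^+$, since the explicit general solution $A + Bt$ or $A\cos(b(g)t) + B\sin(b(g)t)$ extends smoothly to $[0,\infty)$ with $A, B$ pinned down uniquely by the limits of $f_g(t)$ and $f_g'(t)$ as $t \to 0^+$) forces $f_g$ to equal $\widehat{x_0}(g) + t\widehat{y_0}(g)$ when $g \in \text{ker}(b)$ and $\cos(tb(g))\widehat{x_0}(g) + \frac{\sin(tb(g))}{b(g)}\widehat{y_0}(g)$ otherwise. By Proposition \ref{3.4.3} this is exactly $\widehat{u(t)}(g)$. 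Since $\widehat{v(t)}(g) = \widehat{u(t)}(g)$ for all $g \in G$ and all $t \geq 0$, and the map $x \mapsto \widehat{x}$ is injective on $\text{vN}(G) \supseteq C_r^*(G)$, we conclude $v(t) = u(t)$. The main technical point is the coefficient computation $f_g'' = -d(g) f_g$, which is exactly the reason $\mathcal{D}$ was chosen to be $F^2(G)$: it is the smallest domain on which two successive applications of $\delta_b^{\mathcal{C}}$ are both well-defined and compatible with the Fourier-coefficient identity of Corollary \ref{3.4}.
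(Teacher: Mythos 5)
Your proposal is correct and follows essentially the same route as the paper: reduce to the scalar ODE $f_g'' + d(g)f_g = 0$ for each Fourier coefficient via two applications of Corollary \ref{3.4}, use the boundedness of $x \mapsto \tau(x\lambda(g)^*)$ to interchange it with the derivatives and the $t\to 0^+$ limits, pin down the constants from the initial data, match against Proposition \ref{3.4.3}, and conclude by injectivity of the Fourier transform. The only cosmetic difference is that the paper writes out the interchange of $\tau(\cdot\,\lambda(g)^*)$ with the difference quotients explicitly, while you invoke it as a general property of bounded functionals.
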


\begin{proof}

The assumption on $v$ says:

\begin{enumerate}
    \item[$\bullet$] $v(0)=x_0$,
    \item[$\bullet$] $v(t) \in \mathcal{D}$ for every $t>0$,
    \item[$\bullet$] $v$ is twice differentiable on $(0,\infty)$ and $v^{\prime\prime}(t)=H_{d}^{\mathcal{D}}(v(t))$, $t>0$,
    \item[$\bullet$] $\lim_{t\to 0^+}v(t)=x_0$,
    \item[$\bullet$] $\lim_{t\to 0^+}v^{\prime}(t)=y_0$.
\end{enumerate}

It is clear that $v(0)=u(0)=x_0$.

Let $t>0$. Because $v(t) \in \mathcal{D}$, it is in the domain of $\delta_b^{\mathcal{C}}$, so Corollary \ref{3.4} gives that for all $g  \in G$, $$\widehat{\delta_b^{\mathcal{C}}(v(t))}(g)=ib(g)\widehat{v(t)}(g)$$In addition, $v(t) \in \mathcal{D}=F^2(G)$ means that $\delta_b^{\mathcal{C}}(v(t)) \in F^1(G)$ which again belongs to the domain of $\delta_b^{\mathcal{C}}$. This not only says that $\delta_b^{\mathcal{C}}(\delta_b^{\mathcal{C}}(v(t))))=:H_d^{\mathcal{D}}(v(t))$ is defined, but also that it is equal to $$\sum_{g \in G}(ib(g))\widehat{\delta_b^{\mathcal{C}}(v(t))}(g)\lambda(g).$$Putting these together and making use of the third item above, we get

\begin{align*}
    v^{\prime\prime}(t)&=H_{d}^{\mathcal{D}}(v(t))=\sum_{g \in G}(ib(g))\widehat{\delta_b^{\mathcal{C}}(v(t))}(g)\lambda(g)=\sum_{g \in G}(ib(g))(ib(g))\widehat{v(t)}(g)\lambda(g)=\sum_{g \in G} -(b(g))^2\widehat{v(t)}(g)\lambda(g)
\end{align*}
with the series on the right hand side being convergent w.r.t. operator norm. Then Proposition \ref{2.2}  gives $$\widehat{v^{\prime\prime}(t)}(g)=-(b(g))^2\widehat{v(t)}(g) \text{ for all }g \in G \text{ and }t >0.$$

For fixed $g \in G$, set $w_g(t):=\widehat{v(t)}(g)$ for all $t>0$. As in the proof of Proposition 4.5 in \cite{bedos2024heat}, using the assumptiont that $v(t)$ is twice differentiable and hence differentiable, we get 

\begin{align*}
    w_g^{\prime}(t)&=\lim_{h\to 0}\frac{w_g(t+h)-w_g(h)}{h}\\&=\lim_{h\to 0}\frac{\widehat{v(t+h)}(g)-\widehat{v(t)}(g)}{h}\\&=\lim_{h\to 0}\bigg(\frac{v(t+h)-v(t)}{h} \bigg)^{\widehat{\enspace}}(g)\\&=\lim_{h\to 0}\tau\bigg(\bigg(\frac{v(t+h)-v(t)}{h}\bigg)\lambda(g)^* \bigg)\\&=\tau\bigg( \bigg(\lim_{h\to 0}\frac{v(t+h)-v(t)}{h}\bigg)\lambda(g)^*\bigg)\\&=\tau(v^{\prime}(t)\lambda(g)^*)\\&=\widehat{v^{\prime}(t)}(g)
\end{align*}

So $$w_g^{\prime}(t)=\widehat{v^{\prime}(t)}(g)\enspace \text{ for all }g \in G,t>0.$$Using that $v^{\prime}(t)$ is differentiable, we get

\begin{align*}
    w_g^{\prime\prime}(t)&=\widehat{v^{\prime\prime}(t)}(g)=-b(g)^2\widehat{v(t)}(g)=-b(g)^2w_g(t)
\end{align*}
for all $g \in G, t >0.$

So for each fixed $g \in G$, we have the following differential equation for $t>0$ $$w_g^{\prime\prime}(t)+b(g)^2w_g(t)=0 \enspace \enspace (*)$$

If $g \in \text{ker}(b)$, $(*)$ becomes $$w_g^{\prime\prime}(t)=0,$$ whose solution is $w_g(t)=a_1t+a_2$ for some constant $a_1,a_2$. Then since $\lim_{t\to 0^+}v(t)=x_0$, we have $$\widehat{x_0}(g)=\lim_{t\to 0^+}\widehat{v(t)}(g)=\lim_{t\to 0^+}w_g(t)=\lim_{t\to 0^+}a_1t+a_2=a_2$$whence $a_2=\widehat{x_0}(g)$.

Next, from the above we have $w_g^{\prime}(t)=a_1$. Now using the assumption that $\lim_{t\to 0^+}v^{\prime}(t)=y_0$ we get 

\begin{align*}
    \widehat{y_0}(g)=\lim_{t\to 0^+}\widehat{v^{\prime}(t)}(g)=\lim_{t\to 0^+}w_g^{\prime}(t)=\lim_{t\to 0^+}a_1=a_1
\end{align*}
whence $a_1 = \widehat{y_0}(g)$.

Thus, we can conclude that if $g \in \text{ker}(b)$, then $$\widehat{v(t)}(g)=w_g(t)=a_2+a_1t=\widehat{x_0}(g)+t\cdot \widehat{y_0}(g)$$

Assuming now that $g\not \in \text{ker}(b)$. Then the solution to $(*)$ is $$w_g(t)=C_g^1e^{tib(g)}+C_g^2e^{-tib(g)}\enspace (t>0)$$for some constants $C_g^1,C_g^2 \in \mathbb{C}$. We then have $$w_g^{\prime}(t)=(C_g^1\cdot i\cdot b(g))e^{tib(g)}+(C_g^2\cdot (-i)\cdot b(g))e^{-tib(g)}$$

Since $\lim_{t\to 0^+}w_g(t)=\lim_{t\to 0^+}\widehat{v(t)}(g)=\widehat{x_0}(g)$, we get

\begin{align*}
    \widehat{x_0}(g)&=\lim_{t\to 0^+}w_g(t)=\lim_{t\to 0^+}C_g^1e^{tib(g)}+C_g^2e^{-tib(g)}=C_g^1+C_g^2.
\end{align*}

Further, since
$ \lim_{t\to 0^+}w_g^{\prime}(t)=\lim_{t\to 0^+}\widehat{v^{\prime}(t)}(g)=\widehat{y_0}(g)$, we also get

\begin{align*}
    \widehat{y_0}(g)&=\lim_{t\to 0^+}w_g^{\prime}(t)=\lim_{t\to 0^+}(C_g^1\cdot i\cdot b(g))e^{tib(g)}+(C_g^2\cdot (-i)\cdot b(g))e^{-tib(g)}=(C_g^1\cdot i\cdot b(g)) + (C_g^2\cdot (-i)\cdot b(g))
\end{align*}

Thus, we get the system

\begin{align*}    C_g^1+C_g^2&=\widehat{x_0}(g),\\
C_g^1-C_g^2&=(-i)\cdot \frac{\widehat{y_0}(g)}{b(g)}. 
\end{align*}

Solving this system gives $$C_g^1=\frac{1}{2}\bigg( \widehat{x_0}(g)-i\cdot \frac{\widehat{y_0}(g)}{b(g)}\bigg)\enspace\text{ and }\enspace C_g^2=\frac{1}{2}\bigg( \widehat{x_0}(g)+i\cdot \frac{\widehat{y_0}(g)}{b(g)}\bigg).$$

Therefore,

\begin{align*}
    \widehat{v(t)}(g)&=w_g(t)\\&=C^1_ge^{tib(g)}+C^2_ge^{-tib(g)}\\&=\frac{1}{2}\bigg( \widehat{x_0}(g)-i\cdot \frac{\widehat{y_0}(g)}{b(g)}\bigg)e^{tib(g)}+\frac{1}{2}\bigg( \widehat{x_0}(g)+i\cdot \frac{\widehat{y_0}(g)}{b(g)}\bigg)e^{-tib(g)}\\&=\widehat{x_0}(g)\bigg( \frac{e^{tib(g)}+e^{-tib(g)}}{2}\bigg)+\widehat{y_0}(g)\cdot \bigg(\frac{1}{b(g)}\bigg)\bigg(\frac{e^{tib(g)}-e^{-tib(g)}}{2i} \bigg)\\&=\cos(tb(g))\widehat{x_0}(g)+\frac{\sin(tb(g))}{b(g)}\widehat{y_0}(g).
\end{align*}

In conclusion, for $t>0$, we have shown that $\widehat{v(t)}(g)=\widehat{u(t)}(g)$ for all $g \in G$. This implies that for all $t>0$, $v(t)=u(t)$, and completes the proof.

\end{proof}

For completeness, we mention a few concrete examples to illustrate the above framework.

\begin{example}\label{4.5} The following is a short list of countably infinite groups $G$ together with a nonzero homomorphism $b:G \to (\mathbb{R},+)$.
    \begin{enumerate}
        \item[$(i)$] Let $G  = \bigoplus_{i=1}^{\infty}\mathbb{Z}$ consists of elements with all but finitely many nonzero entries, denote a typical element by $(a_i)$, define $$b((a_i))=\sum_{i=1}^{\infty}\frac{a_i}{2^i}.$$
        \item[$(ii)$] Let $G$ be the Heisenberg group over $\mathbb{Z}$, let a typical element of $G$ be denoted by  $$[l,m,n]:=\begin{pmatrix}
1 & l & m\\
0 & 1 & n\\
0 & 0 & 1\\
\end{pmatrix},\enspace l,m,n\in \mathbb{Z}. $$Define $b:G \to \mathbb{R}$ by $b([l,m,n])=l$.
\item[$(iii)$] Let $G = \mathbb{F}_2$ be the free group on two generators $x,y$ and let $b(x)=1$ and $b(y)=0$, then extend $b$ linearly to all of $\mathbb{F}_2$, one can check that $b$ is a nonzero homomorphism into the additive real numbers.
    \end{enumerate}
\end{example}

\vspace{.1in}

\section{Further Questions and Comments}

\subsection{}Much of the work done in \cite{bedos2024heat} is to investigate the the so-called heat properties of groups which sheds light on the connection between the algebraic properties of groups and functions defined on them (e.g., amenability, negative definiteness of the function $d:G \to [0,\infty)$) and the solution of the associated heat problem on $C_r^*(G)$. If one is looking for the wave equation analogue of such group properties, the motivation is likely coming from the oscillatory behavior involved in the solution of the wave equation, paralleling how the heat equation is linked to diffusion processes. However, due to the author's limited background in physics, no such properties are proposed at this stage.

\subsection{}Concerning the wave problem on $C_r^*(G)$, there are several directions for generalization. First, as mentioned in the introduction, one may search for more general kind of functions $d:G \to [0,\infty)$ with which the associated wave problem has a justifiable solution. Second, for countably infinite abelian groups $G$, by introducing a 2-cocycle $\sigma:G\times G\to \mathbb{T}$ as done in \cite{bedos2024heat}, one may get the framework for the wave equation in the context of $C_r^*(G,\sigma)$. Furthermore, in view of the work done in \cite{bedos2015fourier}, it may be possible to extend the current work to the setting of discrete $C^*$-cross products $A \rtimes_{\alpha} G$, in which case the wave equation would incorporate the dynamics from an action $\alpha:G \to \text{Aut}(A) $.

\subsection{}Finally, one could investigate the stability of the formulation and solution of the wave problem on groups. For example, given a group $G$ and a subgroup $H$, what are the relationships between the formulation and solution of the wave equation on $C_r^*(G)$ and $C_r^*(H)$, or, the same question may be asked regarding $C_r^*(G_1\times G_2)$ and $C_r^*(G_1)$, $ C_r^*(G_2)$, or that of $C_r^*(G)$ and $C_r^*(G/H)$ when $H$ is normal.

\vspace{.1in}

\bibliographystyle{plain}
\bibliography{reference}

\begin{thebibliography}{10}

\bibitem{applebaum2019semigroups}
David Applebaum.
\newblock {\em Semigroups of linear operators}, volume~93.
\newblock Cambridge University Press, 2019.

\bibitem{bedos2009twisted}
Erik B{\'e}dos and Roberto Conti.
\newblock On twisted {F}ourier analysis and convergence of {F}ourier series on discrete groups.
\newblock {\em Journal of Fourier Analysis and Applications}, 15:336--365, 2009.

\bibitem{bedos2015fourier}
Erik B{\'e}dos and Roberto Conti.
\newblock Fourier series and twisted {C}*-crossed products.
\newblock {\em Journal of Fourier Analysis and Applications}, 21(1):32--75, 2015.

\bibitem{bedos2024heat}
Erik B{\'e}dos and Roberto Conti.
\newblock Heat {P}roperties for {G}roups.
\newblock {\em Journal of Fourier Analysis and Applications}, 30(4):46, 2024.

\bibitem{bratteli2012operator}
Ola Bratteli and Derek~William Robinson.
\newblock {\em Operator algebras and quantum statistical mechanics: Volume 1: C*-and W*-Algebras. Symmetry Groups. Decomposition of States}.
\newblock Springer Science \& Business Media, 2012.

\bibitem{dym1972fourier}
H.~Dym and H.~P. McKean.
\newblock {\em Fourier {S}eries and {I}ntegrals}.
\newblock Probability and Mathematical Statistics. Academic Press, New York, 1972.

\bibitem{engel2000one}
Klaus-Jochen Engel, Rainer Nagel, and Simon Brendle.
\newblock {\em One-parameter semigroups for linear evolution equations}, volume 194.
\newblock Springer, 2000.

\bibitem{haagerup1978example}
Uffe Haagerup.
\newblock An example of a non nuclear {C}*-algebra, which has the metric approximation property.
\newblock {\em Inventiones mathematicae}, 50(3):279--293, 1978.

\bibitem{hewitt1997abstract}
Edwin Hewitt and Kenneth~A Ross.
\newblock Abstract harmonic analysis. volume {II}: Structure and analysis for compact groups. analysis on locally compact abelian groups.
\newblock {\em Die Grundlehren der mathematischen Wissenschaften}, 152, 1997.

\bibitem{Huang2025Wave}
Fan Huang.
\newblock {\em The Wave Equation in the Context of Reduced Group C*-Algebras}.
\newblock Ph.d. dissertation, Arizona State University, 2025.

\bibitem{Lance1976UnboundedDO}
Christopher Lance and Assadollah Niknam.
\newblock Unbounded derivations of group \( {C}^* \)-algebras.
\newblock {\em Proceedings of the London Mathematical Society}, 3(2):418--434, 1976.

\bibitem{stein2011fourier}
Elias~M Stein and Rami Shakarchi.
\newblock {\em {Fourier Analysis: An Introduction}}, volume~1.
\newblock Princeton University Press, 2011.

\end{thebibliography}

\vspace{.1in}

Address of the author:

\end{document}